\begin{document}

\begin{center}
{\Huge Global solution and blow-up for the SKT model in Population Dynamics} 
\\[0pt]

\bigskip 

\textbf{Ichraf Belkhamsa and Messaoud Souilah } \\[0pt]

\bigskip 

Department of Mathematics, Faculty of Sciencs, Univerity Blida 1, P.O.Box
270, Blida, Algeria.

ORCID iD: https://orcid.org/0000-0003-0603-5741.

Department of Analysis, Faculty of Mathematics, University of Science and
Technology Houari Boumediene (USTHB), P.O.Box 32, Algiers, Algeria.

ORCID iD: https://orcid.org/0000-0002-2918-3395.
\end{center}

\textbf{Abstract:} \ 

In this paper, we prove the existence and uniqueness of the global solution
to the reaction diffusion system SKT with homogeneous Newmann boundary
conditions. We use the lower and upper solution method and its associated
monotone iterations, where the reaction functions are locally Lipschitz. We
study the blowing-up property of the solution and give sufficient conditions
on the reaction parameters of the model to ensure the blow-up of the
solution in continuous function spaces.

\textbf{Keywords:} \ Reaction diffusion systems, SKT model, Upper and lower
solutions, Global solution, Blow-up.

\section{Introduction}

Various mathematical models from population dynamics translated into
reaction-diffusion systems posed in a bounded domain of $\mathbb{R}^{n}$.
For example, the Sheguesada Kawasaki Teramoto model (see \cite{ref10})
proposed in 1978, which include the following problem:

\begin{equation}
\left \{ 
\begin{array}{l}
u_{1t}-\Delta \lbrack (d_{1}+\alpha _{1}u_{1}+\beta
_{1}u_{2})u_{1}]=u_{1}(a_{1}-b_{1}u_{1}+c_{1}u_{2})\  \text{in }Q_{T} \\ 
u_{2t}-\Delta \lbrack (d_{2}+\alpha _{2}u_{2}+\beta
_{2}u_{1})u_{2}]=u_{2}(a_{2}+b_{2}u_{1}-c_{2}u_{2})\  \text{in }Q_{T} \\ 
\dfrac{\partial u_{1}}{\partial \eta }=\dfrac{\partial u_{2}}{\partial \eta }%
=0\  \text{on }S_{T}=(0,T]\times \partial \Omega \\ 
u_{1}(0,x)=u_{1,0}(x),\ u_{2}(0,x)=u_{2,0}(x)\text{ on }\Omega%
\end{array}%
\right.  \label{39}
\end{equation}
\noindent where $\Omega $ is bounded domain in $\mathbb{R}^{n}(n\geq
1),Q_{T}=(0,T]\times \Omega $$,\ S_{T}$ is the boundary and the closure of $%
\Omega $. $d_{i},\alpha _{i},\beta _{i},a_{i}$ and $b_{i},c_{i}$ are
positive constants, $\Delta=\sum_{i=1}^{n}\partial ^{2}/\partial ^{2}x_{i}$
is the Laplace operator, and $\frac{\partial }{\partial \eta }$ denotes the
directional derivative along the outward normal on $\partial \Omega $. The
problem$(\ref{39})$ has been treated by many researchers, who are devoted to
the blow-up of solution and the global existence with either Newmann or
Dirichle boundary condition by various methods (cf.\cite{ref11,ref19,ref18}%
). In 1984, Kim \cite{ref25} proved the universal existence of classical
solutions to $(\ref{39})$ for $n = 1$ when $d_{1}=d_{2}$, $\alpha_{1}> 0,
\beta_{2} > 0$, and $\beta_{1}= \alpha_{1} = 0$. Recently, Shim \cite{ref26}
refined Kim's findings and established uniform boundaries for solutions by
utilizing a new technique. In several important papers \cite{ref22}, \cite%
{ref23}, and \cite{ref24}, Amann established the local existence of a
solution to (\ref{39}). His results were based on the use of the $W_{p}^{1}$%
-estimates ($p>1$).

If $di=\alpha _{i}=\beta _{i}=0 (i=1,2)$ the problem $(\ref{39}) $ is the
historical Volterra model. For $\alpha _{i}=\beta _{i}=0,$ $(\ref{39})$ is
the Lotka-Volterra system; in this case, C.V.Pao in \cite{ref7} proved that
for $b_{1}c_{2}<c_{1}b_{2}$ the problem admit a unique solution, and for $%
b_{1}c_{2}>c_{1}b_{2}$, the solution blow-up. In the same case, Lou,
Nagylaki and Ni \cite{ref6} studied the effect of diffusion on the blow-up
of solution in finite time $T^{\ast }$. For $n = 2$, Y. Lou, Ni, and Wu
showed in \cite{ref27} that the system (1) has a unique global solution.

On the other hand, Choi, Lui, and Yamada \cite{ref28} examined the problem
when $\alpha_{2} = 0$ and $\beta_{2} = 0$, for any value of $n$. However,
their findings demonstrated the global existence of the solution to problem $%
(\ref{39})$ only when the coefficient $\beta_{1}$ was sufficiently small.
Subsequently, in 2004, they continued their research and established that
the solution to equation $(\ref{39})$ exists globally either when $%
\alpha_{2} = 0$ or when $\alpha_{2} > 0$, with $\beta_{2} = 0$ and $n < 6$
(See \cite{ref29}).

In another approach, Linling.Z, Zhi.Li and Zhigui.Li in \cite{ref20},
devoted to the global existence and blow-up of solutions for system (\ref{39}%
) for $\beta _{i}=0$ with Dirichlet boundary condition, the global solution
is proved by using the upper and lower solution method and sufficient
conditions are given for the solution to blow-up. In particular, for $\beta
_{i}=0,$ R.Hoyer and M.Souilah \cite{ref14} investigated the existence of
global and local solution to the reaction-diffusion system (\ref{39}), they
have given a sufficient condition on the reaction parameters to ensure the
global existence of the solution to the problem in the Sobolev space $%
W^{2,p} $.

In this paper, we study the global and blow-up of the solution to the
reaction diffusion system:

\begin{equation}
\left \{ 
\begin{array}{l}
u_{1t}-\Delta \lbrack (d_{1}+\alpha _{1}u_{1})u_{1}]=f_{1}(u_{1},u_{2})\text{%
\  \ },\text{in }Q_{T}=(0,T]\ast \Omega \\ 
u_{2t}-\Delta \lbrack (d_{2}+\alpha _{2}u_{2})u_{2}]=f_{2}(u_{1},u_{2}\ ),%
\text{in }Q_{T}=(0,T]\ast \Omega \\ 
\dfrac{\partial u_{1}}{\partial \eta }=\dfrac{\partial u_{2}}{\partial \eta }%
=0,\text{on }S_{T}=(0,T]\times \partial \Omega \\ 
u_{1}(0,x)=u_{1,0}(x),\ u_{2}(0,x)=u_{2,0}(x),\text{ on }\Omega%
\end{array}%
\right.  \label{1}
\end{equation}

\noindent where $f_{1}(u_{1},u_{2} )=u_{1}(-a_{1}+b_{1}u_{1}-c_{1}u_{2})$, $%
f_{2}(u_{1},u_{2})=u_{2}(-a_{2}-b_{2}u_{1}+c_{2}u_{2})$,\newline
$f_{1},f_{2}$ are quasimonotone decreasing.

This paper is arranged as follows: In Section 2, we prove the existence and uniqueness of a global solution in continuous spaces by using upper and lower solutions and their associated monotone iterations. Specifically, we create upper and lower sequences that are monotonous of opposing monotony and converge to the same limit, the latter is the unique and global solution to the problem $(\ref{1})$. Thereafter, we give sufficient conditions to ensure the global existence of $(\ref{1})$. 
Section 3 proposes a concept based on creating a lower sequence that reflects the solution to the Cauchy problem, and we illustrate its convergence towards infinity. Finally, we provide sufficient conditions to ensure the explosion.

\section{Existence of global solution}

This section aims to establish the existence of a global solution to $(\ref{1})$. First, we give the definition of ordered upper and lower solutions of $(\ref{1})$:

\begin{definition}
\label{def1}
The pair $w^{(0)}=(w_{1}^{(0)},w_{2}^{(0)}),v^{(0)}=(v_{1}^{(0)},v_{2}^{(0)}) $ in $
C(\Omega )\cap C^{1,2}(\Omega )$ are called order upper and lower solution
of $(\ref{1}),$ if $w^{(0)}\mathbb{\geq }v^{(0)}$ and $w^{(0)},v^{(0)}$
satisfies the following relations:

\begin{equation}
\left \{ 
\begin{array}{l}
(w_{1}^{(0)})_{t}-\Delta \lbrack (d_{1}+\alpha _{1}w_{1}^{(0)})w_{1}^{(0)}]%
\mathbb{\geq }%
w_{1}^{(0)}(-a_{1}+b_{1}w_{1}^{(0)}-c_{1}v_{2}^{(0)})
\\ 
(w_{2}^{(0)})_{t}-\Delta \lbrack (d_{2}+\alpha _{2}w_{2}^{(0)})w_{2}^{(0)}]%
\mathbb{\geq }%
w_{2}^{(0)}(-a_{2}-b_{2}v_{1}^{(0)}+c_{2}w_{2}^{(0)})
\\ 
\dfrac{\partial w_{i}^{(0)}}{\partial \eta }\mathbb{\geq }0 \\ 
w_{i}^{(0)}(x,0)\mathbb{\geq }u_{i,0}(x)%
\end{array}%
\right.  \label{2}
\end{equation}
and
\begin{equation}
\left \{ 
\begin{array}{l}
(v_{1}^{(0)})_{t}-\Delta \lbrack (d_{1}+\alpha _{1}v_{1}^{(0)})v_{1}^{(0)}]%
\mathbb{\leq }%
v_{1}^{(0)}(-a_{1}+b_{1}v_{1}^{(0)}-c_{1}w_{2}^{(0)})
\\ 
(v_{2}^{(0)})_{t}-\Delta \lbrack (d_{2}+\alpha _{2}v_{2}^{(0)})v_{2}^{(0)}]%
\mathbb{\leq }%
v_{2}^{(0)}(-a_{2}-b_{2}w_{1}^{(0)}+c_{2}v_{2}^{(0)})
\\ 
\dfrac{\partial v_{i}^{(0)}}{\partial \eta }\mathbb{\leq }0 \\ 
v_{i}^{(0)}(x,0)\mathbb{\leq }v_{i,0}(x)%
\end{array}%
\right.  \label{3}
\end{equation}
\end{definition}

\noindent Let's define  $J\times J=\{(u_{1},u_{2})\in (C(\overline{Q}))^{2}:(v_{1},v_{2})\mathbb{%
\leq }(u_{1},u_{2})\mathbb{\leq }(w_{1},w_{2})\}$.

\noindent We denote $C(\overline{Q}_{T})=C((0,T]\times \overline{\Omega })$
the space of all bounded and continuous functions in $(0,T]\times \overline{\Omega}$.

%\noindent We can write $(\ref{1})$ in the form:
%\begin{equation}
%\left \{ 
%\begin{array}{l}
%\dfrac{\partial u_{1}}{\partial t}-[(d_{1}+2\alpha _{1}u_{1})\Delta
%u_{1}+2\alpha _{1}\nabla u_{1}\nabla u_{1}]=f_{1}(u_{1},u_{2})\text{ in }%
%Q_{T}=(0,T]\times \Omega  \\ 
%\dfrac{\partial u_{2}}{\partial t}-[(d_{2}+2\alpha _{2}u_{2})\Delta
%u_{2}+2\alpha _{2}\nabla u_{2}\nabla u_{2}]=f_{2}(u_{1},u_{2})\text{ in }%
%Q_{T}=(0,T]\times \Omega  \\ 
%\dfrac{\partial u_{1}}{\partial \eta }=\dfrac{\partial u_{2}}{\partial \eta }%
%=0\text{ on }S_{T}=(0,T]\times \partial \Omega  \\ 
%u_{1}(0,x)=u_{1,0}(x),\ u_{2}(0,x)=u_{2,0}(x)\text{ on }\Omega 
%\end{array}%
%\right.   \label{6}
%\end{equation}
Considering only the positive solution of $(\ref{1})$, we define:
\begin{equation}
\begin{array}{l}
P_{i}(u_{i})=h_{i}=\int_{0}^{u_{i}}(d_{i}+2\alpha
_{i}s)ds=(d_{i}+\alpha _{i}u_{i})u_{i}\geq 0 \\ 
h_{it}=(d_{i}+2\alpha _{i}u_{i})u_{it},\  i=1,2  
\end{array}%
\label{101}
\end{equation}
Where
$$
u_{i}=\frac{-d_{i}+\sqrt{d_{i}^{2}+4\alpha _{i}h_{i}}}{2\alpha _{i}}
=q_{i}(h_{i})
$$

\noindent We may express the problem $(\ref{1})$ in the equivalent form
\begin{equation}
\left \{ 
\begin{array}{l}
(d_{i}+2\alpha _{i}u_{i})^{-1}h_{it}-\Delta h_{i}=f_{i}(u_{1},u_{2})\  \text{%
in }Q_{T}\text{ } \\ 
\dfrac{\partial h_{i}}{\partial \eta }=0\text{ on }S_{T} \\ 
h_{i}(x,0)=h_{i,0}(x)\text{ in }\Omega \\ 
u_{i}=q(h_{i})\text{ in }\overline{Q}_{T}%
\end{array}%
\right.  \label{7}
\end{equation}

\noindent Define $Lh_{i}=\Delta h_{i}-\varphi _{i}h_{i}$ where $\varphi _{i}>0$ and 
$F_{i}(u_{1},u_{2})=f_{i}(u_{1},u_{2})+\varphi _{i}h_{i}, \ i=1,2$

\noindent For any $u_{1}>0, u_{2}>0$, the problem $(\ref{7})$  is equivalent to

\begin{equation}
\left \{ 
\begin{array}{l}
(d_{i}+2\alpha _{i}u_{i})^{-1}h_{it}-Lh_{i}=F_{i}(u_{1},u_{2}) \text{in }
Q_{T}\text{ } \\ 
\dfrac{\partial h_{i}}{\partial \eta }=0\text{ on }S_{T} \\ 
h_{i}(x,0)=h_{i,0}(x)\text{ in }\Omega%
\end{array}
\right.  \label{8}
\end{equation}
First, we present the following positivity lemma:
\begin{lemma}
\label{1a}
Let $\sigma (x,t)>0$ in $Q_{T}$, $\beta \geq 0$ on $S_{T}$ and
let either

$(i)$ $e(t,x)>0$ in $Q_{T}$ or

$(ii)$ $(-\dfrac{e}{\sigma (x,t)})$ be bounded in $\overline{Q}_{T}.$

If $z\in C^{1,2}(Q_{T})\cap C(\overline{Q}_{T})$ satisfies the following
inequalities:

$\left \{ 
\begin{array}{l}
\sigma (t,x)z_{t}-\Delta z+e(x,t)z\geq 0\text{ in }Q_{T}\text{ } \\ 
\dfrac{\partial z}{\partial \eta }+\beta (x,t)z\geq 0\text{ in }S_{T} \\ 
z(x,0)\geq 0\text{ in }\Omega%
\end{array}%
\right. $

then $z\geq 0$ in $Q_{T}$
\end{lemma}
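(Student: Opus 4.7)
The plan is to apply the parabolic maximum principle together with the Hopf boundary--point lemma, after first absorbing the possibly non-positive zeroth-order coefficient by an exponential gauge transformation and then adding a small positive perturbation that converts any would-be touching point of $z$ into a strict contradiction.

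\textbf{Step 1: Reduction of (ii) to (i).} Under the boundedness assumption (ii), choose $\lambda>\sup_{\overline{Q}_T}(-e/\sigma)$ (finite by hypothesis) and set $z=e^{\lambda t}w$. A direct substitution yields
\begin{equation*}
\sigma w_t-\Delta w+\widetilde{e}\,w\geq 0\quad\text{in }Q_T,\qquad \widetilde{e}:=\lambda\sigma+e>0,
\end{equation*}
while the Robin and initial inequalities are preserved because $e^{\lambda t}>0$. Since $z\geq 0$ iff $w\geq 0$, it suffices to treat case (i).

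\textbf{Step 2: Perturbation and interior contradiction.} Assume now $e>0$. For $\epsilon>0$ set $w:=z+\epsilon$, so that
\begin{equation*}
\sigma w_t-\Delta w+ew\geq \epsilon e>0\text{ in }Q_T,\quad w(\cdot,0)\geq\epsilon,\quad \tfrac{\partial w}{\partial\eta}+\beta w\geq \epsilon\beta\geq 0\text{ on }S_T.
\end{equation*}
Suppose, for contradiction, that $w$ is not everywhere positive. By continuity on the compact cylinder $\overline{Q}_T$, the function $g(t):=\min_{\overline{\Omega}}w(\cdot,t)$ is continuous, $g(0)\geq\epsilon>0$, and there is a first time $t_*\in(0,T]$ with $g(t_*)=0$; choose $x_*\in\overline{\Omega}$ with $w(x_*,t_*)=0$. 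If $x_*\in\Omega$, then $w_t(x_*,t_*)\leq 0$, $\Delta w(x_*,t_*)\geq 0$, and $ew(x_*,t_*)=0$, whence $\sigma w_t-\Delta w+ew\leq 0$ at that point, contradicting the strict inequality above.

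\textbf{Step 3: Boundary case and main obstacle.} The remaining case $x_*\in\partial\Omega$ is the main technical obstacle: the Robin condition gives only $\partial w/\partial\eta(x_*,t_*)\leq 0$ and $\beta w(x_*,t_*)=0$, so the boundary inequality is satisfied as an equality and cannot be contradicted outright. This is precisely what the parabolic Hopf boundary-point lemma for the operator $\sigma\partial_t-\Delta+e$ is designed for: its hypotheses ($\sigma>0$, $e$ bounded after Step 1, and an interior sphere condition at $x_*$) are met, and since $w\geq 0$ on $\overline{Q}_{t_*}$ with $w(x_*,t_*)=0$ and $w$ not identically zero (because $w(\cdot,0)\geq\epsilon$), one concludes $\partial w/\partial\eta(x_*,t_*)<0$, contradicting $\partial w/\partial\eta+\beta w\geq 0$. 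Therefore $w>0$ on $\overline{Q}_T$ for every $\epsilon>0$, and letting $\epsilon\downarrow 0$ yields $z\geq 0$ in $Q_T$.
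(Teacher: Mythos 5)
The paper does not actually prove this lemma: its ``proof'' is a one-line citation to Lemma~1.2 of Pao's book \cite{ref7}, so there is no internal argument to compare against. Your proof is precisely the standard argument behind that positivity lemma --- exponential gauge $z=e^{\lambda t}w$ to make the zeroth-order coefficient positive, the $\epsilon$-perturbation to force strict inequality, a first-touching-time argument for an interior minimum, and the parabolic Hopf boundary-point lemma for a boundary minimum --- and it is correct in structure. The interior case is handled cleanly ($w_t\le 0$ from the left difference quotient at the first touching time, $\Delta w\ge 0$ at an interior spatial minimum, $ew=0$), and the reduction of (ii) to (i) is right.

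One claim deserves correction, and one hypothesis should be made explicit. First, Step~1 does \emph{not} make $\widetilde{e}=\lambda\sigma+e$ bounded, only positive; under (ii) you get $\widetilde{e}\ge(\lambda-C)\sigma>0$ but no upper bound, and under (i) you have no bound on $e$ at all. The Hopf boundary-point lemma you invoke in Step~3 is a barrier construction that does require a local upper bound on both $\sigma$ and the zeroth-order coefficient near $(x_*,t_*)$ (the term $-\,e v$ and the $\sigma v_t$ term must be dominated by $4\alpha^2|x-x_*|^2 e^{-\alpha|x-x_*|^2}$), as well as an interior sphere condition on $\partial\Omega$; the sign of $e$ is harmless here only because the minimum value is $0$ and $w\ge 0$, $e\ge 0$ nearby. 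These boundedness and regularity requirements are not guaranteed by the lemma as stated in the paper --- they are part of the standing hypotheses in Pao's version --- so you should either add them to your hypotheses or note that you are using them. Finally, in the boundary case you should say explicitly that the interior argument excludes \emph{every} interior zero of $w(\cdot,t_*)$, so that $w>0$ throughout $\Omega\times(0,t_*]$; this, rather than ``$w$ not identically zero,'' is what the Hopf lemma needs.
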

\begin{proof}
See proof the lemma 1.2 of  \cite{ref7}.
\end{proof}
\newline
Using either $w^{(0)}$ or $v^{(0)}$ as the initial iteration. We can
construct a sequence $\left( u^{(k)},h^{(k)}\right) $ from the iteration
process for any $k$ and $i=1,2.$

\begin{equation}
\left \{ 
\begin{array}{l}
(d_{i}+2\alpha
_{i}u_{i}^{(k)})^{-1}h_{it}^{(k)}-Lh_{i}^{(k)}=F_{i}(u_{1}^{(k-1)},u_{2}^{(k-1)})
\text{ in }Q_{T}\text{ } \\ 
\dfrac{\partial h_{i}^{(k)}}{\partial \eta }=0\text{ on }S_{T} \\ 
h_{i}^{(k)}(x,0)=h_{i,0}^{(k)}(x)\text{ in }\Omega  \\ 
u_{i}^{(k)}=q(h_{i}^{(k)})\text{ in }\overline{Q}_{T}
\end{array}%
\right.   \label{9}
\end{equation}

\noindent Denote the sequences by $( w^{(k)},\overline{h}^{(k)}) $ and $(v^{(k)},\underline{h}^{(k)})$ respectively, where
$( w^{(0)},\overline{h}^{(0)}) $ and $(v^{(0)},\underline{h}^{(0)})$ are the initial
iteration of the sequences.
\subsection{Monotone sequences}
\label{Subsec:1}

In the following lemma, we show the monotony of the sequences $( w^{(k)},\overline{h}^{(k)}) $ and 
$(v^{(k)},\underline{h}^{(k)}).$
\begin{lemma}
\label{1.2}
The sequences $( w^{(k)},\underline{h}^{(k)}) $ and $(v^{(k)},
\overline{h}^{(k)})$ defined by $(\ref{9})$ possess the monotone property 
\begin{equation}
(v^{(0)},\underline{h}^{(0)})\leq (v^{(k)},\underline{h}^{(k)})\leq
(v^{(k+1)},\underline{h}^{(k+1)})\leq (w^{(k+1)},\overline{h}^{(k+1)})\leq
(w^{(k)},\overline{h}^{(k)})\leq (w^{(0)},\overline{h}^{(0)})  \label{10}
\end{equation}
where $\overline{h}$ and $\underline{h}$ are the upper and lower bounds of $h.$
\end{lemma}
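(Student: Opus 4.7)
The plan is to prove $(\ref{10})$ by induction on $k$, with the positivity result Lemma~\ref{1a} as the workhorse. At each step one sets $z_i$ equal to the difference of two successive iterates (either of the same type or of the two types at the same index), checks that $z_i$ satisfies a linear parabolic differential inequality with nonnegative boundary and initial data, and concludes $z_i \geq 0$ from Lemma~\ref{1a}.

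For the base case $k=0$, three inequalities are required: $\underline{h}^{(0)} \leq \underline{h}^{(1)}$, $\overline{h}^{(1)} \leq \overline{h}^{(0)}$, and $\underline{h}^{(1)} \leq \overline{h}^{(1)}$. For the first, rewrite the lower-solution inequalities $(\ref{3})$ in the $h$-variable via the transformation $(\ref{101})$, add $\varphi_i \underline{h}_i^{(0)}$ to both sides so that the operator $L$ appears on the left, and subtract from the iteration $(\ref{9})$ at $k=1$, whose right-hand side is exactly $F_i$ evaluated at $v^{(0)}$ (with the appropriate cross-component from $w^{(0)}$). Setting $z_i = \underline{h}_i^{(1)} - \underline{h}_i^{(0)}$, the resulting inequality takes the form $\sigma_i^{(1)} z_{it} - \Delta z_i + \varphi_i z_i \geq 0$ once the residual coming from the change in the coefficient $(d_i + 2\alpha_i v_i^{(k)})^{-1}$ between levels $0$ and $1$ has been absorbed into the zeroth-order term by choosing $\varphi_i$ sufficiently large. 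Lemma~\ref{1a} then yields $z_i \geq 0$. The second inequality is symmetric, using $(\ref{2})$ in place of $(\ref{3})$. For the third, take $z_i = \overline{h}_i^{(1)} - \underline{h}_i^{(1)}$ and exploit the quasi-monotone decreasing structure of the reactions $f_1, f_2$ together with $v^{(0)} \leq w^{(0)}$ to show that the difference of $F_i$ terms on the right of the two iterations has the sign required by Lemma~\ref{1a}.

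For the inductive step, assume $(\ref{10})$ holds through level $k$. The same three arguments, now applied to the differences $\underline{h}^{(k+1)} - \underline{h}^{(k)}$, $\overline{h}^{(k)} - \overline{h}^{(k+1)}$, and $\overline{h}^{(k+1)} - \underline{h}^{(k+1)}$, extend the ordering to level $k+1$. The quasi-monotone decreasing structure — $f_1$ decreasing in $u_2$ and $f_2$ decreasing in $u_1$ — together with the inductive ordering of the cross-component arguments, ensures that each difference of right-hand sides has the correct sign to apply Lemma~\ref{1a} to the corresponding $z_i$.

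The main technical obstacle is the iterate-dependent parabolic coefficient $(d_i + 2\alpha_i u_i^{(k)})^{-1}$. Subtracting two consecutive iterations produces an unwanted term of the shape $\bigl[(d_i + 2\alpha_i u_i^{(k+1)})^{-1} - (d_i + 2\alpha_i u_i^{(k)})^{-1}\bigr]\,(h_i^{(k)})_t$, whose sign is not fixed a priori. It must be tamed by a mean-value estimate on the smooth map $s \mapsto (d_i + 2\alpha_i s)^{-1}$, using that the inductive hypothesis confines the iterates to the rectangle $\langle v^{(0)}, w^{(0)} \rangle$ and hence bounds them uniformly, and then by enlarging $\varphi_i$ so that the zeroth-order term $\varphi_i z_i$ dominates. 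The same enlargement also guarantees that $F_i = f_i + \varphi_i h_i$ is monotone increasing in its own component on this rectangle, which is what makes the linearized operator on the left-hand side compatible with the hypotheses of Lemma~\ref{1a}.
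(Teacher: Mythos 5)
Your proposal is correct and follows essentially the same route as the paper: induction on $k$, taking differences of successive iterates in the $h$-variable, a mean-value estimate on $s\mapsto(d_i+2\alpha_i s)^{-1}$ to turn the coefficient mismatch into a zeroth-order term, Lemma~\ref{1a} for positivity, and quasimonotone decrease of $(f_1,f_2)$ to order the upper and lower sequences. The only cosmetic difference is that you enlarge $\varphi_i$ so that the zeroth-order coefficient dominates the mean-value residual, whereas the paper simply invokes condition $(ii)$ of Lemma~\ref{1a}, which requires only boundedness (not nonnegativity) of that coefficient and thus avoids needing a uniform-in-$k$ bound on $\underline{h}_{it}^{(k)}$ before fixing $\varphi_i$.
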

\begin{proof}$\forall k\geq 1,$ where $\overline{h}=P(w)$ and $\underline{h}=P(v)$

\noindent \textbf{Step 1: Initialisation $(k=1)$}
 
\begin{eqnarray*}
\text{Let} \quad  \underline{z}_{i}^{(1)} =\underline{h}_{i}^{(1)}-\underline{h}_{i}^{(0)}%
\text{\ and } \overline{z}_{i}^{(1)} =\overline{h}_{i}^{(0)}-\overline{h}_{i}^{(1)}
\end{eqnarray*}
\noindent for $i=1$, we replace $\underline{z}_{1}^{(1)}$ in $(\ref{9})$
\begin{eqnarray*}
&&(d_{1}+2\alpha _{1}v_{1}^{(1)})^{-1}\underline{z}_{1t}^{(1)}-L_{1}
\underline{z}_{1}^{(1)} \\
&=&F_{1}(v_{1}^{(0)},w_{2}^{(0)})-(d_{1}+2\alpha _{1}v_{1}^{(1)})^{-1}
\underline{h}_{1t}^{(0)}+L_{1}\underline{h}_{1}^{(0)} \\
&=&F_{1}(v_{1}^{(0)},w_{2}^{(0)})+L_{1}\underline{h}_{1}^{(0)}-(d_{1}+2
\alpha _{1}v_{1}^{(0)})^{-1}\underline{h}_{1t}^{(0)}+[(d_{1}+2\alpha
_{1}v_{1}^{(0)})^{-1}-(d_{1}+2\alpha _{1}v_{1}^{(1)})^{-1}]\underline{h}
_{1t}^{(0)} \\
&=&\Delta \underline{h}_{1}^{(0)}+f_{1}(v_{1}^{(0)},w_{2}^{(0)})-(d_{1}+2
\alpha _{1}v_{1}^{(0)})^{-1}\underline{h}_{1t}^{(0)}+[(d_{1}+2\alpha
_{1}v_{1}^{(0)})^{-1}\ -(d_{1}+2\alpha _{1}v_{1}^{(1)})^{-1}]\underline{h}
_{1t}^{(0)}
\end{eqnarray*}

\noindent We use $(\ref{3})$ to get
\begin{equation*}
(d_{1}+2\alpha _{1}v_{1}^{(1)})^{-1}\underline{z}_{1t}^{(1)}-L_{1}\underline{%
z}_{1}^{(1)}\geq -[(d_{1}+2\alpha _{1}v_{1}^{(1)})^{-1}-(d_{1}+2\alpha
_{1}v_{1}^{(0)})^{-1}]\underline{\ h}_{1t}^{(0)}
\end{equation*}

\noindent by the mean value theorem, we obtain
\begin{equation*}
(d_{1}+2\alpha _{1}v_{1}^{(1)})^{-1}-(d_{1}+2\alpha _{1}v_{1}^{(0)})^{-1}=%
\frac{-2\alpha _{1}}{(d_{1}+2\alpha _{1}\xi _{1}^{(0)})^{3}}(\underline{h}%
_{1}^{(1)}-\underline{h}_{1}^{(0)})
\end{equation*}
\noindent for some intermediate value $\xi _{1}^{(0)}$ between $v_{1}^{(0)}$ and $%
v_{1}^{(1)}.$
\noindent we have,
\begin{equation*}
(d_{1}+2\alpha _{1}v_{1}^{(1)})^{-1}\underline{z}_{1t}^{(1)}-L_{1}\underline{%
z}_{1}^{(1)}+\gamma _{1}^{(0)}\underline{z}_{1}^{(1)}\geq 0\text{ in \ }%
\Omega \times (0,T],\text{ \ where }\gamma _{1}^{(0)}=\frac{-2\alpha _{1}}{%
(d_{1}+2\alpha _{1}\xi _{1}^{(0)})^{3}}\underline{h}_{1t}^{(0)}
\end{equation*}
\noindent on the other hand,
\begin{eqnarray*}
\frac{\partial \underline{z}_{1}^{(1)}}{\partial \eta } &=&\frac{\partial (%
\underline{h}_{1}^{(1)}-\underline{h}_{1}^{(0)})}{\partial \eta }=\frac{%
\partial \underline{h}_{1}^{(1)}}{\partial \eta }-\frac{\partial \underline{h%
}_{1}^{(0)}}{\partial \eta }=0\text{ on }\partial \Omega \times (0,T] \\
\underline{z}_{1}^{(1)}(x,0) &=&\  \  \underline{h}_{1}^{(1)}(x,0)-\underline{h%
}_{1}^{(0)}(x,0)\geq 0\text{ in }\Omega
\end{eqnarray*}

\noindent It follows from the lemma  \ref{1a} that $\underline{z}_{1}^{(1)}\geq
0 $ and thus $v_{1}^{(1)}\geq v_{1}^{(0)}.$

 \noindent We replace $\underline{z}_{2}^{(1)}$ in  $(\ref{9})$ and  use a same reasoning we give $\underline{z}_{2}^{(1)}\geq 0$ and thus $v_{2}^{(1)}\geq v_{2}^{(0)}.$

\noindent A similar argument gives $\overline{z}_{i}\geq 0$ and $%
w_{i}^{(0)}\geq w_{i}^{(1)}.$ \newline  Let $y_{i}^{(1)} =\overline{h}_{i}^{(1)}-\underline{h}_{i}^{(1)},\ i=1,2$

\noindent  for $i=1$ we know that  $y_{1}^{(1)}$  satisfies
\begin{eqnarray*}
(d_{1}+2\alpha _{1}w_{1}^{(1)})^{-1}y_{1t}^{(1)}-L_{1}y_{1}^{(1)}+\delta
_{1}^{(1)}y_{1}^{(1)}
=F_{1}(w_{1}^{(0)},v_{2}^{(0)})-F_{1}(v_{1}^{(0)},w_{2}^{(0)})\geq 0
\end{eqnarray*}

\noindent where \ $\delta _{1}^{(1)}=\dfrac{-2\alpha _{1}\underline{h}_{1t}^{(1)}}{%
(d_{1}+2\alpha _{1}\xi _{1}^{(1)})^{3}}$ with $\xi _{1}^{(1)}$ is the
intermediare value between $w_{1}^{(1)}$ and $v_{1}^{(1)}$

\begin{equation*}
\begin{array}{l}
\dfrac{\partial y_{1}^{(1)}}{\partial \eta }=0 \\ 
y_{1}^{(1)}(x,0)=\overline{h}_{1}^{(1)}(x,0)-\underline{h}_{1}^{(1)}(x,0)=0
\end{array}%
\end{equation*}

\noindent It follows again from lemma \ref{1a}, we have $y_{1}^{(1)}\geq 0,$ implies that$\  \overline{h}%
_{1}^{(1)}\geq \underline{h}_{1}^{(1)}$ and therefore $w_{1}^{(1)}\geq
v_{1}^{(1)}.$ The same line of reasoning yields $
y_{2}^{(1)}\geq 0,$ which leads to $\overline{h}_{2}^{(1)}\geq \underline{h}_{2}^{(1)
}$and $w_{2}^{(1)}\geq v_{2}^{(1)\text{ }}.$

\noindent The foregoing conclusion shows that
\begin{equation*}
(v_{i}^{(0)},\underline{h}_{i}^{(0)})\leq (v_{i}^{(1)},\underline{h}%
_{i}^{(1)})\leq (w_{i}^{(1)},\overline{h}_{i}^{(1)})\leq (w_{i}^{(0)},%
\overline{h}_{i}^{(0)})
\end{equation*}

\noindent \textbf{Step 2 : Iteration $(k>1)$}\\
%+++++++++++++++++++++++++++++++++++++++++++++++++++
\noindent Assume by induction that
\begin{equation*}
(v_{i}^{(k)},\underline{h}_{i}^{(k)})\leq (v_{i}^{(k+1)},\underline{h}
_{i}^{(k+1)})\leq (w_{i}^{(k+1)},\overline{h}_{i}^{(k+1)})\leq (w_{i}^{(k)},
\overline{h}_{i}^{(k)}), k>1
\end{equation*}
Let  $\underline{z}_{i}^{(k+1)}=\underline{h}_{i}^{(k+1)}-\underline{h}
_{i}^{(k)}$ and $\overline{z}_{i}^{(k+1)}=\overline{h}_{i}^{(k+1)}-\overline{h}_{i}^{(k)}.$\\
Using $(\ref{9})$ again, by the quasi monotone nonincreasing of 
$(f_{1},f_{2})$,  then  $\underline{z}_{1}^{(k+1)}$ satisfies
\begin{equation*}
\left \{ 
\begin{array}{l}
(d_{1}+2\alpha _{1}v_{1}^{(k+1)})^{-1}\underline{z}_{1t}^{(k+1)}-L_{1}%
\underline{z}_{1}^{(k+1)}+\gamma _{1}^{(k)}\underline{z}%
_{1}^{(k+1)}=F_{1}(v_{1}^{(k)},w_{2}^{(k)})-F_{1}(v_{1}^{(k-1)},w_{2}^{(k-1)})\geq 0
\\ 
\dfrac{\partial \underline{z}_{1}^{(k+1)}}{\partial \eta }=0\text{ on }%
\partial \Omega \times (0,T]\text{ } \\ 
\underline{z}_{1}^{(k+1)}(x,0)=0\text{ in }\Omega%
\end{array}%
\right.
\end{equation*}

\noindent where $\gamma _{1}^{(k)}=\dfrac{-2\alpha _{1}}{(d_{1}+2\alpha _{1}\xi
_{1}^{(k)})^{3}}\underline{h}_{1t}^{(k)}, \xi _{1}^{(k)}$ is the intermediare
value between $v_{1}^{(k)}$ and $v_{1}^{(k+1)}$

\noindent by lemma \ref{1a}, we have $\underline{z}_{1}^{(k+1)}\geq 0$ wich leads to $\underline{h}_{1}^{(k+1)}\geq \underline{h}_{1}^{(k)}$ and $v_{1}^{(k+1)}\geq $ $v_{1}^{(k)}.$\\
Similarly, we have  $\underline{z}_{2}^{(k+1)}\geq 0$ impilies that $\underline{h}_{2}^{(k+1)}\geq \underline{h}_{2}^{(k)}$ and $v_{2}^{(k+1)}\geq $ $v_{2}^{(k)}.$
\noindent We use the same procedure to get  $\overline{h}_{i}^{(k+1)}\geq \overline{h}_{i}^{(k)}\geq \underline{h}_{i}^{(k+1)},$ and thus $w_{i}^{(k)}\geq w_{i}^{(k+1)}\geq v_{i}^{(k+1)}$ $(i=1,2).$
\end{proof} 
\subsection{Convergence of the two sequences}

\begin{lemma}( monotony convergence.)
\label{Subsec:2}
From  lemma \ref{1.2} the sequences $(w^{(k)},\overline{h}^{(k)})$ and $\left(v^{(k)},\underline{h}
^{(k)}\right)$ obtained from $(\ref{9})$ are convergent and verify the following limits
%The result $(\ref{10})$ of the two sequences follows by induction. The
%monotone property $(\ref{10})$ implies also that the pointwise limit holds

\begin{equation*}
\lim_{k\rightarrow +\infty }(w^{(k)},\overline{h}^{(k)})=(w,\overline{h}),%
\text{ }\lim_{k\rightarrow +\infty }(v^{(k)},\underline{h}^{(k)})=(v,
\underline{h})
\end{equation*}
\end{lemma}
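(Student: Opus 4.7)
The plan is to read this purely as a monotone convergence statement for two sequences of functions, one monotonically increasing and one monotonically decreasing, each bounded between the initial upper and lower solutions. The previous Lemma \ref{1.2} already provides all the order structure that is needed; all that remains is to extract a pointwise limit and record its basic properties.

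First, I would fix an arbitrary point $(x,t)\in\overline{Q}_T$ and an index $i\in\{1,2\}$, and read the chain of inequalities $(\ref{10})$ at that point. This shows that the real-valued sequence $k\mapsto \underline{h}_i^{(k)}(x,t)$ is nondecreasing and bounded above by $\overline{h}_i^{(0)}(x,t)$, while $k\mapsto \overline{h}_i^{(k)}(x,t)$ is nonincreasing and bounded below by $\underline{h}_i^{(0)}(x,t)$. By the monotone convergence theorem for bounded monotone sequences of real numbers, both sequences possess finite pointwise limits, which I would denote
\begin{equation*}
\overline{h}_i(x,t)=\lim_{k\to\infty}\overline{h}_i^{(k)}(x,t),\qquad
\underline{h}_i(x,t)=\lim_{k\to\infty}\underline{h}_i^{(k)}(x,t),
\end{equation*}
so that $\underline{h}_i^{(0)}\le \underline{h}_i\le \overline{h}_i\le \overline{h}_i^{(0)}$ on $\overline{Q}_T$. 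The same argument for $w_i^{(k)}$ and $v_i^{(k)}$ yields pointwise limits $w_i$ and $v_i$ with $v_i^{(0)}\le v_i\le w_i\le w_i^{(0)}$.

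Next, I would check that these two families of limits are mutually consistent through the change of variable (\ref{101}). For every $k$ the iteration (\ref{9}) imposes $u_i^{(k)}=q_i(h_i^{(k)})$, i.e.\ $w_i^{(k)}=q_i(\overline{h}_i^{(k)})$ and $v_i^{(k)}=q_i(\underline{h}_i^{(k)})$, and the function
\begin{equation*}
q_i(s)=\frac{-d_i+\sqrt{d_i^{2}+4\alpha_i s}}{2\alpha_i}
\end{equation*}
is continuous on $[0,\infty)$. Passing to the pointwise limit in these identities gives $w_i=q_i(\overline{h}_i)$ and $v_i=q_i(\underline{h}_i)$, so the pairs $(w,\overline{h})$ and $(v,\underline{h})$ are well-defined and the two announced limits
\[
\lim_{k\to+\infty}(w^{(k)},\overline{h}^{(k)})=(w,\overline{h}),\qquad
\lim_{k\to+\infty}(v^{(k)},\underline{h}^{(k)})=(v,\underline{h})
\]
hold pointwise on $\overline{Q}_T$.

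The only delicate point I foresee is that this argument yields, strictly speaking, only pointwise convergence, while for later use (passing to the limit in the parabolic system (\ref{9}) to identify $(w,\overline{h})$ and $(v,\underline{h})$ as solutions of (\ref{7})) one needs some form of uniform or $L^p$ convergence plus estimates on the derivatives $h_{it}^{(k)}$ and $\Delta h_i^{(k)}$. That stronger convergence, however, is not claimed in the statement of the present lemma; here the monotone, uniformly bounded character of the sequences, combined with Dini-type reasoning if the iterates are continuous, is already enough to justify the written limits. I would therefore keep the proof short and postpone any uniform-convergence refinement to the subsequent step where the limit is shown to solve the problem.
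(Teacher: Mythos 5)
Your argument is correct and is precisely the intended one: the paper states this lemma without giving any proof, and the justification it relies on is exactly what you wrote, namely that by Lemma \ref{1.2} each sequence $\overline{h}_i^{(k)}(x,t)$, $\underline{h}_i^{(k)}(x,t)$ is monotone and bounded between $\underline{h}_i^{(0)}$ and $\overline{h}_i^{(0)}$, hence converges pointwise, and the continuity of $q_i$ transfers these limits to $w_i^{(k)}=q_i(\overline{h}_i^{(k)})$ and $v_i^{(k)}=q_i(\underline{h}_i^{(k)})$. Your closing caveat, that only pointwise convergence is obtained at this stage and that identifying the limits as solutions of (\ref{7}) requires more, is also consistent with the paper, which defers that identification to Theorem \ref{b}.
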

In the following, we show that $(w,\overline{h})=(v,\underline{h})=(u^{\ast
},h^{\ast })$ where $u^{\ast }$ is the unique solution for $(\ref{1}).$
\subsection{Uniqueness of the solution}
\label{Subsec:3}

\begin{theorem}
\label {b}
Let $(w_{1}^{(0)},w_{2}^{(0)}),(v_{1}^{(0)},v_{2}^{(0)})$ be the upper and
lower solution of $(\ref{1}).$  Then the sequences $(w^{(k)},\overline{h}^{(k)}
)$ and $(v^{(k)},\underline{h}^{(k)})$ obtained from $(\ref{9})$
converge monotonically to a unique solution $(u^{\ast },h^{\ast })$ of $(\ref%
{7}),$ and they satisfy the relation

$$(v^{(0)},\underline{h}^{(0)})\leq (v^{(k)},\underline{h}^{(k)})\leq
(v^{(k+1)},\underline{h}^{(k+1)})\leq (u^{\ast },h^{\ast })\leq (w^{(k+1)},%
\overline{h}^{(k+1)})\leq (w^{(k)},\overline{h}^{(k)})\leq (w^{(0)},%
\overline{h}^{(0)})\text{,\ }k\geq 1$$
\end{theorem}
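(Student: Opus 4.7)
The plan is to combine Lemma \ref{1.2} and Lemma \ref{Subsec:2} with a passage-to-the-limit argument in the iteration scheme (\ref{9}), and then to show that the two limit pairs coincide, thereby producing a unique solution $(u^{\ast}, h^{\ast})$ of (\ref{7}). The chain of inequalities in the theorem will then be immediate from the monotonicity already established in Lemma \ref{1.2}, once $(u^{\ast}, h^{\ast})$ is identified as the common limit.

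First I would use Lemma \ref{1.2}: the iterates form two bounded monotone sequences in $\overline{Q}_T$, so pointwise limits $(w, \overline{h})$ and $(v, \underline{h})$ exist with $v\leq w$ and $\underline{h}\leq \overline{h}$. Since the convergence is monotone pointwise and every iterate is continuous on the compact set $\overline{Q}_T$, Dini's theorem upgrades it to uniform convergence. Next I would show that each limit is a classical solution of (\ref{7}). The uniform bounds on $u_i^{(k)}$ provided by Lemma \ref{1.2} keep the coefficient $(d_i+2\alpha_i u_i^{(k)})^{-1}$ uniformly positive and bounded, and the forcing $F_i(u_1^{(k-1)},u_2^{(k-1)})$ is uniformly bounded as well. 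Applying the standard parabolic Schauder (or $L^p$) estimates to the linear equation for $h_i^{(k)}$ in (\ref{9}) yields uniform control in a space such as $C^{1+\alpha/2,\,2+\alpha}(\overline{Q}_T)$. Compactness then allows me to pass to the limit term by term in (\ref{9}) and to conclude that $(w,\overline{h})$ and $(v,\underline{h})$ each satisfy (\ref{7}).

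To establish that the two limits actually coincide, I would set $z_i=\overline{h}_i-\underline{h}_i\geq 0$ and subtract the limiting equations. Using the mean value theorem on $(d_i+2\alpha_i u_i)^{-1}$ — in exactly the same way as in the proof of Lemma \ref{1.2} — together with the Lipschitz continuity of $F_i$ on the order interval $J\times J$ and the quasimonotone-nonincreasing structure of $(f_1,f_2)$, I obtain a linear parabolic inequality for $z_i$ with homogeneous Neumann condition and $z_i(x,0)=0$. Applying Lemma \ref{1a} to $-z_i$ forces $z_i\leq 0$, hence $z_i\equiv 0$, so $\overline{h}=\underline{h}$ and $w=v=:u^{\ast}$. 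Uniqueness of $(u^{\ast},h^{\ast})$ within the sector $[v^{(0)},w^{(0)}]$ is automatic: any other solution there would sit between the two monotone sequences and be squeezed to the same common limit.

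The main obstacle is the passage to the limit in the middle step: monotone pointwise (or even uniform) convergence of $h_i^{(k)}$ does not by itself give convergence of $h_{it}^{(k)}$ and $\Delta h_i^{(k)}$, which is what is needed to recover (\ref{7}) in the limit. Securing the Hölder-type parabolic estimates rests on the uniform positivity of the coefficient $(d_i+2\alpha_i u_i^{(k)})^{-1}$ (hence on the nonnegativity of the iterates enforced by the lower solution), on sufficient regularity of $\partial\Omega$ and of the initial data $u_{i,0}$, and on the standard linear parabolic theory for Neumann problems; this is the technical heart of the argument, everything else being the clean monotone-iteration/maximum-principle scheme already deployed in Lemmas \ref{1a} and \ref{1.2}.
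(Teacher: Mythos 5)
The paper itself does not write out a proof of Theorem \ref{b}: it only defers to Theorem 3.1 of \cite{ref21}, and your outline follows exactly that standard monotone-iteration route (monotone limits via Lemma \ref{1.2}, parabolic estimates to pass to the limit in (\ref{9}), then identification of the two limits). Most of it is sound, but there is a genuine gap at the one step the theorem actually hinges on, namely showing $(w,\overline{h})=(v,\underline{h})$.

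Because $(f_{1},f_{2})$ is quasimonotone \emph{nonincreasing}, the iteration (\ref{9}) couples the components crosswise: the upper sequence for $h_{1}$ is driven by $F_{1}(w_{1}^{(k-1)},v_{2}^{(k-1)})$ and the lower one by $F_{1}(v_{1}^{(k-1)},w_{2}^{(k-1)})$. Hence the limits satisfy the \emph{quasi-solution} system
\[
(d_{1}+2\alpha _{1}w_{1})^{-1}\overline{h}_{1t}-L\overline{h}_{1}=F_{1}(w_{1},v_{2}),\qquad
(d_{1}+2\alpha _{1}v_{1})^{-1}\underline{h}_{1t}-L\underline{h}_{1}=F_{1}(v_{1},w_{2}),
\]
and subtracting gives, for $z_{1}=\overline{h}_{1}-\underline{h}_{1}\geq 0$, a right-hand side $F_{1}(w_{1},v_{2})-F_{1}(v_{1},w_{2})=[F_{1}(w_{1},v_{2})-F_{1}(v_{1},v_{2})]+c_{1}v_{1}(w_{2}-v_{2})$, i.e.\ a bound of the form $K_{1}z_{1}+K_{2}z_{2}$ with $K_{2}\geq 0$. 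The inequality you would feed to Lemma \ref{1a} for $-z_{1}$ then reads $\sigma _{1}(-z_{1})_{t}-\Delta (-z_{1})+(\varphi _{1}+\gamma _{1}-K_{1})(-z_{1})\geq -K_{2}z_{2}$, whose right side is $\leq 0$, not $\geq 0$: the coupling term $K_{2}z_{2}$ has the wrong sign and cannot be absorbed into the zeroth-order coefficient of $z_{1}$, so the scalar positivity lemma does not close the argument. This is not a technicality; for quasimonotone nonincreasing reaction terms the two limits are a priori only a pair of coupled quasi-solutions, and their coincidence is precisely what must be proved. The standard repair (and what the cited Theorem 3.1 of \cite{ref21} actually does) is an integral Gronwall argument: integrate the inequalities for $z_{1}$ and $z_{2}$ over $\Omega $, use Green's formula and the Neumann condition to eliminate the Laplacians, obtain $\frac{d}{dt}\int_{\Omega }(\sigma _{1}z_{1}+\sigma _{2}z_{2})\,dx\leq C\int_{\Omega }(z_{1}+z_{2})\,dx$ with zero initial data, and conclude $z_{1}\equiv z_{2}\equiv 0$; alternatively, invoke a positivity lemma for weakly coupled parabolic systems rather than the scalar Lemma \ref{1a}. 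With that replacement (and noting that your appeal to Dini's theorem presupposes continuity of the limit, which in any case comes out of the same Schauder bounds you invoke for the passage to the limit), the rest of your argument, including the squeezing argument for uniqueness in the sector, is the intended one.
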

The proof of this theorem is similar to Theorem 3.1 in \cite{ref21}.

%\begin{Remark} 
Creating a pair of ordered upper and lower solutions to $(\ref{1})$ guarantees a global solution to the problem $(\ref{1})$. In fact, we only need to construct an upper solution bounded since the lower solution is sufficiently small and is bounded by $(0,0).$
We have the following theorem
%\end{Remark}

\subsection{Conditions on the parameters of the model}

\label{Subsec:4} 
\begin{theorem}
Let $\lambda _{0}$ be the
first eigenvalue of the Laplacian with 
\begin{center}
$\left \{ 
\begin{array}{l}
2\alpha _{1}\lambda _{0}\leq b_{1} \\ 
2\alpha _{2}\lambda _{0}\leq c_{2} \\ 
(2\alpha _{2}\lambda _{0}-c_{2})(2\alpha _{1}\lambda _{0}-b_{1})-b_{2}c_{1}>0
\end{array}%
\right. $
\end{center}
If the following conditions hold
$$
\frac{-a_{1}(2\alpha _{2}\lambda _{0}-c_{2})+a_{2}c_{1}}{(2\alpha _{2}\lambda _{0}-c_{2})(2\alpha _{1}\lambda
_{0}-b_{1})-b_{2}c_{1}}\leq \frac{a_{1}c_{2}+a_{2}c_{1}}{c_{2}b_{1}-c_{1}b_{2}} 
$$
$$
 \frac{-a_{2}(2\alpha _{1}\lambda _{0}-b_{1})+a_{1}b_{1}}{(2\alpha
_{2}\lambda _{0}-c_{2})(2\alpha _{1}\lambda _{0}-b_{1})-b_{2}c_{1}}\leq \frac{a_{1}b_{2}+a_{2}b_{1}}{%
c_{2}b_{1}-c_{1}b_{2}}
$$

then the problem $(\ref{1})$ admits a unique global solution $(u_{1},u_{2})$
in $\overline{\Omega }\times \lbrack 0,T].$ 

\end{theorem}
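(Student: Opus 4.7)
The plan is to invoke Theorem~\ref{b}: existence and uniqueness of a global solution of \eqref{1} on every $[0,T]$ follows once we exhibit an ordered pair $(v^{(0)},w^{(0)})$ of lower and upper solutions (Definition~\ref{def1}) that dominates the initial data; the rest is an automatic application of the monotone iteration \eqref{9}.

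I would take the trivial lower solution $v^{(0)}\equiv(0,0)$: every inequality in \eqref{3} reduces to $0\leq 0$, the Neumann trace vanishes, and $v^{(0)}\leq u_{i,0}$ by the usual nonnegativity assumption on the initial data. For the upper solution the natural ansatz is $w_i^{(0)}(x)=K_i\phi_0(x)$, where $\phi_0>0$ is an eigenfunction of $-\Delta$ associated with $\lambda_0$ and compatible with $\partial\phi_0/\partial\eta=0$. Combining $-\Delta\phi_0=\lambda_0\phi_0$ with the expansion $\Delta[(d_i+\alpha_i u)u]=(d_i+2\alpha_i u)\Delta u+2\alpha_i|\nabla u|^2$, setting $v^{(0)}=0$ in \eqref{2}, and dividing the resulting relation by $w_i>0$, the two PDE inequalities collapse to the $2\times 2$ linear system
\begin{equation*}
(b_1-2\alpha_1\lambda_0)K_1-c_1 K_2\leq a_1+\lambda_0 d_1,\qquad -b_2 K_1+(c_2-2\alpha_2\lambda_0)K_2\leq a_2+\lambda_0 d_2.
\end{equation*}

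The first two hypotheses of the theorem provide nonnegative diagonal coefficients; the third furnishes a strictly positive determinant. Cramer's rule therefore yields a unique positive pair $(K_1,K_2)$ whose explicit values are precisely the left-hand sides of the two displayed inequalities in the statement (up to the $\lambda_0 d_i$-correction). The assumed inequalities assert that these $K_i$ remain below the classical Volterra-type equilibrium constants $\bigl((a_1c_2+a_2c_1)/(b_1c_2-b_2c_1),\,(a_1b_2+a_2b_1)/(b_1c_2-b_2c_1)\bigr)$; after normalizing $\phi_0$ so that $K_i\phi_0(x)\geq u_{i,0}(x)$, the pair $(v^{(0)},w^{(0)})$ is ordered and compatible with the initial data, and Theorem~\ref{b} yields the global solution.

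The delicate step is the remaining term $2\alpha_i|\nabla\phi_0|^2$ that I swept aside above: it appears with the wrong sign for an upper-solution inequality. The clean way to dispose of it is to enlarge $K_i$ beyond the Cramer solution, using the strict inequality in the third hypothesis to absorb $2\alpha_i\max_{\overline{\Omega}}|\nabla\phi_0|^2$, or equivalently to replace $\phi_0$ by $\phi_0+\varepsilon$ with $\varepsilon$ large enough that the gradient contribution is dominated by the algebraic margin. A minor caveat is that for the pure Neumann Laplacian the principal eigenvalue is $0$ and the principal eigenfunction is constant, in which case the argument reduces to choosing a constant upper solution and the two displayed inequalities become the equalities pinning down the Volterra equilibrium; the quoted condition is designed to cover both cases uniformly by allowing $\lambda_0\geq 0$.
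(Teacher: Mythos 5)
Your overall strategy (exhibit an ordered pair of upper and lower solutions and invoke Theorem~\ref{b}) is the paper's, but your concrete construction is inverted relative to the paper's, and the inversion breaks the argument. The reaction terms here are quasimonotone \emph{decreasing}, so Definition~\ref{def1} pairs each upper component with the \emph{lower} solution of the other species: the requirement on $w_1^{(0)}$ is $(w_1^{(0)})_t-\Delta[(d_1+\alpha_1 w_1^{(0)})w_1^{(0)}]\geq w_1^{(0)}(-a_1+b_1w_1^{(0)}-c_1v_2^{(0)})$. Once you set $v^{(0)}=(0,0)$, the cross terms $-c_1K_2$ and $-b_2K_1$ in your $2\times2$ system are spurious: the two inequalities decouple into $(b_1-2\alpha_1\lambda_0)K_1\phi_0\leq a_1+\lambda_0 d_1$ and $(c_2-2\alpha_2\lambda_0)K_2\phi_0\leq a_2+\lambda_0 d_2$, which (given the sign hypotheses $2\alpha_i\lambda_0\leq b_1,c_2$) \emph{cap} the admissible size of the upper solution and in general make the required ordering $w^{(0)}\geq(u_{1,0},u_{2,0})$ unattainable; the determinant hypothesis and the two displayed inequalities never enter. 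Your reading of those inequalities --- the Cramer pair staying below the Volterra constants --- is likewise unmotivated inside your own construction: nothing in it requires the $K_i$ to lie below those constants.

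The paper makes the opposite assignment, and that is what produces the stated hypotheses. It takes \emph{constant} upper solutions $(w_1^{(0)},w_2^{(0)})=(N_1,N_2)$ and eigenfunction lower solutions $(v_1^{(0)},v_2^{(0)})=(\rho_1\Phi_0,\rho_2\Phi_0)$ with $\rho_i$ small. The upper-solution inequalities (coupled to $\rho_j\Phi_0\leq N_j$) lead to the upper bounds $N_1\leq(a_1c_2+a_2c_1)/(c_2b_1-c_1b_2)$ and $N_2\leq(a_1b_2+a_2b_1)/(c_2b_1-c_1b_2)$ under $c_2b_1>c_1b_2$, while the lower-solution inequalities, after using $\Delta(\Phi_0^2)\geq-2\lambda_0\Phi_0^2$, lead to the lower bounds that are exactly the left-hand sides of the theorem's displayed conditions; those conditions are then read as the compatibility of the two-sided bounds, i.e.\ the existence of admissible $N_1,N_2$. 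Note also that the eigenfunction belongs in the \emph{lower} solution for a second reason you identified but misdiagnosed: the gradient term in $\Delta(\Phi_0^2)=-2\lambda_0\Phi_0^2+2|\nabla\Phi_0|^2$ has the favourable sign for the ``$\leq$'' inequality, whereas in your upper solution the contribution $-2\alpha_iK_i^2|\nabla\phi_0|^2$ works against ``$\geq$'', and enlarging $K_i$ makes it worse (it grows like $K_i^2$ against a margin that is at best linear in $K_i$), so the proposed absorption does not close that gap either.
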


\begin{proof}
Let $(w_{1}^{(0)},w_{2}^{(0)}),(v_{1}^{(0)},v_{2}^{(0)})$ the upper and
lower solutions of $(\ref{1})$ with
\begin{equation}
(w_{1}^{(0)},w_{2}^{(0)})=(N_{1},N_{2}),\quad     
(v_{1}^{(0)},v_{2}^{(0)})=(v_{1}^{(0)},v_{2}^{(0)})=(\rho _{1}\Phi _{0},\rho
_{2}\Phi _{0})  \label{32}
\end{equation}%
\ 

where $\Phi _{0}(x)\equiv \Phi _{0}$ is the positive eigenfunction
corresponding to $\lambda _{0}$ satisfying : $\Delta \Phi _{0}=-\lambda
_{0}\Phi _{0}$ and $\dfrac{\partial \Phi _{0}}{\partial \eta }=0.$ 

$\rho _{i},\ N_{i}$ $($ $i=1,2)$ are some positive constant with $\rho _{i}$
sufficiently small.

The pair $(\ref{32})$ satisfy the inequality $(\ref{2}),(\ref{3})$ if :

$\  \  \  \  \  \  \  \  \  \  \  \  \  \  \  \  \  \  \  \  \  \  \  \  \  \  \  \  \  \  \  \  \  \  \  \  \  \
\  \  \  \  \  \  \  \  \  \  \  \  \  \  \  \  \  \  \  \  \  \ $%
\begin{equation}
\left \{ 
\begin{array}{l}
(N_{1})_{t}-\Delta \lbrack (d_{1}+\alpha _{1}N_{1})N_{1}]\mathbf{\geq }%
N_{1}(-a_{1}+b_{1}N_{1}-c_{1}\rho _{2}\Phi _{0}) \\ 
(N_{2})_{t}-\Delta \lbrack (d_{2}+\alpha _{2}N_{2})N_{2}]\mathbf{\geq }%
N_{2}(-a_{2}-b_{2}\rho _{1}\Phi _{0}+c_{2}N_{2}) \\ 
(\rho _{1}\Phi _{0})_{t}-\Delta \lbrack (d_{1}+\alpha _{1}\rho _{1}\Phi
_{0})\rho _{1}\Phi _{0}]\mathbf{\leq }\rho _{1}\Phi _{0}(-a_{1}+b_{1}\rho
_{1}\Phi _{0}-c_{1}N_{2}) \\ 
(\rho _{2}\Phi _{0})_{t}-\Delta \lbrack (d_{2}+\alpha _{2}\rho _{2}\Phi
_{0})\rho _{2}\Phi _{0}]\mathbf{\leq }\rho _{2}\Phi
_{0}(-a_{2}-b_{2}N_{1}+c_{2}\rho _{2}\Phi _{0})%
\end{array}%
\right.  \label{34}
\end{equation}
\newline
\  \ from $\Delta (\Phi _{0}^{2})=2(\Phi _{0}\Delta (\Phi _{0})+\left \vert
\nabla \Phi _{0}\right \vert ^{2})\geq -2\lambda _{0}\Phi _{0}^{2}$ then
\newline
\begin{equation}
\  \left \{ 
\begin{array}{l}
0\mathbf{\geq }-a_{1}+b_{1}N_{1}-c_{1}\rho _{2}\Phi _{0} \\ 
0\mathbf{\geq }-a_{2}-b_{2}\rho _{1}\Phi _{0}+c_{2}N_{2} \\ 
2\alpha _{1}\rho _{1}\lambda _{0}\Phi _{0}\mathbf{\leq }
-a_{1}+b_{1}\rho _{1}\Phi _{0}-c_{1}N_{2}\\ 
2\alpha _{2}\rho _{2}\lambda _{0}\Phi _{0}\mathbf{\leq }-a_{2}-b_{2}N_{1}+c_{2}\rho _{2}\Phi _{0}
\end{array}%
\right.  \label{35}
\end{equation}
\newline
As $N_{i}>\rho _{i}\Phi _{0}>0$ the inequalities $(\ref{35})$ become 

\begin{eqnarray}
-a_{1}+b_{1}N_{1}-c_{1}N_{2} &\leq &0  \label{300} \\
-a_{2}-b_{2}N_{1}+c_{2}N_{2} &\leq &0  \notag
\end{eqnarray}

and

\begin{eqnarray}
a_{1}+(2\alpha _{1}\lambda _{0}-b_{1})N_{1}+c_{1}N_{2} &\mathbf{\leq }&0
\label{37} \\
a_{2}+b_{2}N_{1}+(2\alpha _{2}\lambda _{0}-c_{2})N_{2} &\leq &0  \notag
\end{eqnarray}with
\begin{equation*}
2\alpha _{1}\lambda _{0}-b_{1}\leq 0\text{ and }2\alpha _{2}\lambda
_{0}-c_{2}\leq 0
\end{equation*}%

Assuming that $c_{2}b_{1}>c_{1}b_{2}$,  we get from $(\ref{300})$%
\begin{equation*}
N_{1}\leq \frac{a_{1}c_{2}+a_{2}c_{1}}{c_{2}b_{1}-c_{1}b_{2}}\text{ \  \ and
\  \ } N_{2}\leq \frac{a_{1}b_{2}+a_{2}b_{1}}{c_{2}b_{1}-c_{1}b_{2}}
\end{equation*}

On the outher hand, we find from $(\ref{37})$

\begin{equation*}
N_{1}\geq \frac{-a_{1}(2\alpha _{2}\lambda _{0}-c_{2})+a_{2}c_{1}}{(2\alpha _{2}\lambda _{0}-c_{2})(2\alpha _{1}\lambda
_{0}-b_{1})-b_{2}c_{1}}\text{ and \ }N_{2}\geq \frac{-a_{2}(2\alpha _{1}\lambda _{0}-b_{1})+a_{1}b_{1}}{(2\alpha
_{2}\lambda _{0}-c_{2})(2\alpha _{1}\lambda _{0}-b_{1})-b_{2}c_{1}}\text{\ }
\end{equation*}

where 

\begin{equation*}
(2\alpha _{2}\lambda _{0}-c_{2})(2\alpha _{1}\lambda _{0}-b_{1})-b_{2}c_{1}>0
\end{equation*}
\newline
The relations $(\ref{300})$ and $(\ref{37})$ are satisfied by any constants $%
N_{1},N_{2}$ such that    
$$
\frac{-a_{1}(2\alpha _{2}\lambda _{0}-c_{2})+a_{2}c_{1}}{(2\alpha _{2}\lambda _{0}-c_{2})(2\alpha _{1}\lambda
_{0}-b_{1})-b_{2}c_{1}}\leq N_{1}\leq \frac{a_{1}c_{2}+a_{2}c_{1}}{%
c_{2}b_{1}-c_{1}b_{2}}
$$
$$
 \frac{-a_{2}(2\alpha _{1}\lambda _{0}-b_{1})+a_{1}b_{1}}{(2\alpha
_{2}\lambda _{0}-c_{2})(2\alpha _{1}\lambda _{0}-b_{1})-b_{2}c_{1}}\leq N_{2}\leq \frac{a_{1}b_{2}+a_{2}b_{1}}{%
c_{2}b_{1}-c_{1}b_{2}} 
$$

This suggests that
\begin{equation}
\frac{-a_{1}(2\alpha _{2}\lambda _{0}-c_{2})+a_{2}c_{1}}{(2\alpha _{2}\lambda _{0}-c_{2})(2\alpha _{1}\lambda
_{0}-b_{1})-b_{2}c_{1}}\leq \frac{a_{1}c_{2}+a_{2}c_{1}}{%
c_{2}b_{1}-c_{1}b_{2}} \label{200}
\end{equation}
\begin{equation}
 \frac{-a_{2}(2\alpha _{1}\lambda _{0}-b_{1})+a_{1}b_{1}}{(2\alpha
_{2}\lambda _{0}-c_{2})(2\alpha _{1}\lambda _{0}-b_{1})-b_{2}c_{1}}\leq \frac{a_{1}b_{2}+a_{2}b_{1}}{
c_{2}b_{1}-c_{1}b_{2}}   \label{100}
\end{equation}

Hence under conditions $(\ref{200})$ and $(\ref{100})$, there exist positive constants $ N_{i},\rho _{i}(i=1,2)$  such that the pair in $(\ref{32})$ are coupled upper and lower solutions of  $(\ref{1})$, we conclude from lemma \ref{Subsec:2} that the
sequences $\{w^{(k)},\overline{h}^{(k)}\},\{v^{(k)},\underline{h}^{(k)}\}$
converge monotonically to somme $(w,\overline{h}),(v,\underline{h}).$ It
follows from theorem \ref{b} that $(w,\overline{h})=(v,\underline{h})\equiv
(u^{\ast },h^{\ast })$ is the unique solution of $(\ref{1})$ in $Q_{T}$.
\end{proof}
%++++++++++++++++++++++++++++++++++++++++++++++++++++++++++++++
%SECTION 3
%+++++++++++++++++++++++++++++++++++++++++++++++++++++++++++++++

\section{The Blow-up of the solution}

We announce the following general result on blow-up solutions in the partial
differential equations.

\begin{theorem}
\label {c}
Let $(f_{1},f_{2})$ be locally Lipschitz functions in $\mathbb{R}^{+}\times 
\mathbb{R}^{+}$ and let $(v_{1}^{(0)},v_{2}^{(0)})$ be a positive functions
defined in $\left[ 0,T_{0}\right) \times \overline{\Omega }$ for a finite $%
T_{0}$ and unbounded at some point in $\overline{\Omega }$ as $%
t\longrightarrow T_{0}.$ If $(v_{1}^{(0)},v_{2}^{(0)})$ is a lower solution
of $(\ref{1})$ for every $T<T_{0}$ then there exists $T^{\ast }\mathbf{\leq }%
T_{0}$ such that a unique positive solution $\left( u_{1},u_{2}\right) $ to $%
(\ref{1})$ exists in $\overline{Q}$ and satisfies the relation :%
\begin{equation}
\lim_{t\rightarrow T^{\ast }}\max \left[ u_{1}(t,x)+u_{2}(t,x)\right]
=+\infty   \label{15}
\end{equation}
\end{theorem}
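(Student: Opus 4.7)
The plan is to invoke Theorem \ref{b} on an increasing family of time intervals and use the unboundedness of the given lower solution $v^{(0)}$ at $T_{0}$ to force blow-up of the genuine solution. Concretely, for each $T<T_{0}$ I would build a coupled upper-lower pair on $\overline{Q}_{T}$ whose lower member is the given $v^{(0)}$, let Theorem \ref{b} produce a classical solution pinched between them, and then extract blow-up from the supremum of admissible times.

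For the upper solution I would take a spatially independent pair $(W_{1}(t),W_{2}(t))$ defined by the decoupled ODEs $W_{1}^{\prime}=W_{1}(-a_{1}+b_{1}W_{1})$ and $W_{2}^{\prime}=W_{2}(-a_{2}+c_{2}W_{2})$, with initial data chosen strictly larger than both $\sup_{\overline{\Omega}}u_{i,0}$ and $\sup_{\overline{Q}_{T}}v_{i}^{(0)}$ (finite since $v^{(0)}$ is continuous on $\overline{Q}_{T}$). Because the $W_{i}$ depend only on $t$, the quasilinear diffusion $\Delta \lbrack (d_{i}+\alpha _{i}W_{i})W_{i}]$ vanishes, and the Lotka-Volterra cross terms $-c_{1}v_{2}^{(0)}$ and $-b_{2}v_{1}^{(0)}$ in Definition \ref{def1} carry the favourable sign; this reduces the upper-solution inequalities $(\ref{2})$ to the ODE inequalities satisfied by $(W_{1},W_{2})$, while $(\ref{3})$ holds by the assumption that $v^{(0)}$ is a lower solution. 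Theorem \ref{b} then yields a unique positive classical solution $(u_{1},u_{2})$ on $\overline{Q}_{T}$ with $v_{i}^{(0)}\leq u_{i}\leq W_{i}$.

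By the uniqueness clause of Theorem \ref{b}, solutions obtained for different values of $T$ coincide on their common domain, so I would set
$$T^{\ast }=\sup \{T>0:(u_{1},u_{2})\text{ exists classically on }\overline{Q}_{T}\}.$$
The pointwise bound $u_{i}\geq v_{i}^{(0)}$ passes to the limit and holds on $[0,T^{\ast })\times \overline{\Omega }$. Since $v^{(0)}$ is unbounded at some point of $\overline{\Omega }$ as $t\rightarrow T_{0}$, we cannot have $T^{\ast }>T_{0}$, so $T^{\ast }\leq T_{0}$. If $u_{1}+u_{2}$ stayed uniformly bounded on $[0,T^{\ast })\times \overline{\Omega }$, the local construction restarted with Cauchy data taken at a time arbitrarily close to $T^{\ast }$ would extend the solution past $T^{\ast }$, contradicting maximality; this yields $(\ref{15})$.

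The main obstacle will be the local construction: the ODE bound $W_{i}$ blows up in finite time as soon as $W_{i}(0)$ exceeds $a_{i}/b_{i}$, so when $v^{(0)}$ is already large the interval of validity of the auxiliary ODE shrinks and one cannot cover a long interval in a single step. The remedy is to carry out the construction on many short intervals and concatenate via the uniqueness from Theorem \ref{b}, using at each step the current solution as the new Cauchy data and the tail of $v^{(0)}$ as the new lower solution; the resulting maximal existence time is governed by the blow-up of $v^{(0)}$ rather than by the auxiliary ODE, which is precisely what is needed to identify it with $T^{\ast }\leq T_{0}$.
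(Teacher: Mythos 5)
Your proposal reconstructs the argument that the paper itself delegates to Theorem 11.5.1 of \cite{ref7}: pinch a solution between the given unbounded lower solution and a spatially homogeneous ODE upper solution via the monotone-iteration existence theorem (Theorem \ref{b}), let $T^{\ast }$ be the supremum of admissible times, use the pointwise bound $u_{i}\geq v_{i}^{(0)}$ to force $T^{\ast }\leq T_{0}$, and rule out boundedness at $T^{\ast }$ by a continuation/restart argument. The architecture is correct and is the intended route, including your honest observation that the ODE barrier itself blows up and must be restarted on short subintervals, with the maximal existence time then governed by $v^{(0)}$ rather than by the auxiliary ODE.

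One step, however, is asserted where it actually needs proof. Because $f_{1},f_{2}$ are quasimonotone nonincreasing, Definition \ref{def1} is a \emph{coupled} definition: the lower-solution inequality for $v_{1}^{(0)}$ reads $(v_{1}^{(0)})_{t}-\Delta \lbrack (d_{1}+\alpha _{1}v_{1}^{(0)})v_{1}^{(0)}]\leq v_{1}^{(0)}(-a_{1}+b_{1}v_{1}^{(0)}-c_{1}w_{2}^{(0)})$, and its right-hand side \emph{decreases} when the partner $w_{2}^{(0)}$ is enlarged. When you replace that partner by your ODE barrier $W_{2}$, chosen above $\sup v_{2}^{(0)}$ and $\sup u_{2,0}$ and then growing, the inequality becomes strictly harder to satisfy; so the hypothesis ``$v^{(0)}$ is a lower solution of (\ref{1})'' does not by itself imply that $v^{(0)}$ is a lower solution \emph{paired with} $(W_{1},W_{2})$, which is what Theorem \ref{b} requires. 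You must either read the hypothesis as supplying a bounded upper partner $w^{(0)}$ on each $\overline{Q}_{T}$ and work with that partner, or verify the coupled inequality directly with $W_{2}$ in place of $w_{2}^{(0)}$ for the concrete $v^{(0)}$ used later in Theorem \ref{d}; the same compatibility check is needed at every restart of your concatenation. A second, minor point: your continuation argument at $T^{\ast }$ directly yields only that $u_{1}+u_{2}$ is unbounded as $t\rightarrow T^{\ast }$; to obtain the full limit in (\ref{15}) you should add that if some sequence $t_{n}\rightarrow T^{\ast }$ carried uniformly bounded data, the restart from $t_{n}$ would again extend the solution past $T^{\ast }$, since the local existence time from your ODE barrier depends only on the sup-norm of the data. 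With these two repairs the proof is the one the paper intends.
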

\noindent The proof of theorem \ref{c} is similar to that of theorem 11.5.1 of \cite%
{ref7}.\\
\quad Based on the result of this theorem, the blowing-up property of the solution
to $(\ref{1})$ is ensured if there exists a lower solution unbounded on $\overline{\Omega }$ at a
finite time $ T^{\ast }$. The construction of such a lower solution depends on the type of boundary
conditions. For Neumann boundary conditions, if $(f_{1},f_{2})$ satisfy a growth
condition, the lower solution can often be constructed. We have the growth conditions in the following lemma.
\subsection{Conditions on the reaction terms}
\label{Subsec:5}
\begin{lemma}
\noindent If $(f_{1},f_{2})$ satisfy the local Lipschitz continuous property and
\begin{equation}
\psi _{1}=\min \left \{ \mu _{1}b_{1,}\mu _{2}c_{2}\right \} ,\  \psi
_{2}=\left( \mu _{1}c_{1}+\mu _{2}b_{2}\right) /2,\text{ }c=\max \left \{ \mu
_{1}a_{1},\mu _{2}a_{2}\right \} ,\  \psi _{1}>\psi _{2}>0  \label{30}
\end{equation}

\noindent If $\psi =(\psi _{1}-\psi _{2})/2,$ then $(f_{1},f_{2})$ satisfy the growth condition:
\begin{equation}
\mu _{1}f_{1}(u_{1},u_{2})+\mu _{2}f_{2}(u_{1},u_{2})\geq \psi
(u_{1}+u_{2})^{2}-c(u_{1}+u_{2}),\ u_{1}\geq 0,\ u_{2}\geq 0  \label{17}
\end{equation}

\noindent where $\mu _{i},\ i=1,2$ are positive constants.
\end{lemma}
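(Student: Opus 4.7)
The plan is to reduce the inequality to an elementary quadratic estimate by expanding both sides in terms of $u_1,u_2$ and collecting like terms. First I would substitute the explicit forms $f_1(u_1,u_2)=u_1(-a_1+b_1u_1-c_1u_2)$ and $f_2(u_1,u_2)=u_2(-a_2-b_2u_1+c_2u_2)$ to obtain
\[
\mu_1 f_1+\mu_2 f_2 \;=\; \mu_1 b_1 u_1^2 + \mu_2 c_2 u_2^2 - (\mu_1 c_1+\mu_2 b_2)\,u_1 u_2 - \mu_1 a_1 u_1 - \mu_2 a_2 u_2.
\]
After subtracting $\psi(u_1+u_2)^2 - c(u_1+u_2)$, the target inequality becomes
\[
(\mu_1 b_1-\psi)u_1^2+(\mu_2 c_2-\psi)u_2^2 - (\mu_1 c_1+\mu_2 b_2+2\psi)u_1 u_2 + (c-\mu_1 a_1)u_1+(c-\mu_2 a_2)u_2 \;\geq\; 0.
\]

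Next I would dispatch the linear terms using the definition $c=\max\{\mu_1 a_1,\mu_2 a_2\}$, which forces $c-\mu_1 a_1\geq 0$ and $c-\mu_2 a_2\geq 0$, so these contributions are non-negative on $u_1,u_2\geq 0$. What remains is the purely quadratic part, and here the point of the definitions of $\psi_1,\psi_2,\psi$ becomes visible. Since $\mu_1 b_1\geq \psi_1$ and $\mu_2 c_2\geq \psi_1$, the diagonal coefficients satisfy $\mu_1 b_1-\psi\geq \psi_1-\psi$ and $\mu_2 c_2-\psi\geq \psi_1-\psi$. Moreover, $\mu_1 c_1+\mu_2 b_2=2\psi_2$, so the off-diagonal coefficient equals $2\psi_2+2\psi=\psi_1+\psi_2$, using $2\psi=\psi_1-\psi_2$.

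The crux is therefore the elementary inequality
\[
(\psi_1-\psi)(u_1^2+u_2^2) \;\geq\; (\psi_1+\psi_2)\,u_1 u_2 .
\]
Here I would observe the key identity $\psi_1-\psi=\psi_1-(\psi_1-\psi_2)/2=(\psi_1+\psi_2)/2$, which reduces the estimate to $\tfrac12(u_1^2+u_2^2)\geq u_1 u_2$, i.e.\ AM--GM (equivalently $(u_1-u_2)^2\geq 0$). The hypothesis $\psi_1>\psi_2>0$ is used only to guarantee $\psi>0$ and that $\psi_1+\psi_2>0$, so no sign is lost in the chain of inequalities. The main obstacle is not analytic but bookkeeping: keeping track that the off-diagonal coefficient in the quadratic form matches exactly twice the diagonal lower bound after the substitution $2\psi=\psi_1-\psi_2$, which is what makes the AM--GM step tight and the whole argument work.
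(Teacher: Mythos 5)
Your proposal is correct and follows essentially the same route as the paper's proof: expand $\mu_1 f_1+\mu_2 f_2$, use the min/max definitions of $\psi_1$ and $c$ to bound the diagonal and linear coefficients, and close with the identity $\psi_1-\psi=(\psi_1+\psi_2)/2$ so that the quadratic form reduces to $\tfrac{\psi_1+\psi_2}{2}(u_1-u_2)^2\geq 0$. Your bookkeeping via the single quadratic form is in fact cleaner than the paper's intermediate identity (which contains an apparent typo, $2(\psi+\psi_1)u_1u_2$ where $2(\psi+\psi_2)u_1u_2$ is meant), but the mathematical content is identical.
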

\begin{proof}
\begin{equation}
\noindent \mu _{1}f_{1}(u_{1},u_{2})+\mu _{2}f_{2}(u_{1},u_{2})=-(\mu
_{1}a_{1}u_{1}+\mu _{2}a_{2}u_{2})+(\mu _{1}b_{1}u_{1}^{2}+\mu
_{2}c_{2}u_{2}^{2})-(\mu _{1}c_{1}+\mu _{2}b_{2})u_{1}u_{2}  \label{18}
\end{equation}

\noindent by $(\ref{30})$, the $(\ref{18})$ becomes

\begin{equation}
\mu _{1}f_{1}(u_{1},u_{2})+\mu _{2}f_{2}(u_{1},u_{2})=\psi
_{1}(u_{1}^{2}+u_{2}^{2})-2\psi _{2}u_{1}u_{2}-c(u_{1}+u_{2})  \label{18a}
\end{equation}

\noindent such that

\begin{equation*}
\psi _{1}(u_{1}^{2}+u_{2}^{2})-2\psi _{2}u_{1}u_{2}=\psi
(u_{1}+u_{2})^{2}-\psi (u_{1}^{2}+u_{2}^{2})+\psi
_{1}(u_{1}^{2}+u_{2}^{2})-2(\psi +\psi _{1})u_{1}u_{2}
\end{equation*}
We get
\begin{equation*}
\psi _{1}(u_{1}^{2}+u_{2}^{2})-2\psi _{2}u_{1}u_{2}\geq \psi
(u_{1}+u_{2})^{2}+2(\psi _{1}-2\psi -\psi _{2})u_{1}u_{2}
\end{equation*}

\noindent

So,
\begin{equation*}
\psi _{1}(u_{1}^{2}+u_{2}^{2})-2\psi _{2}u_{1}u_{2}\geq \psi
(u_{1}+u_{2})^{2}.
\end{equation*}

\noindent The relation $(\ref{18a})$ becomes
\begin{equation*}
\mu _{1}f_{1}(u_{1},u_{2})+\mu _{2}f_{2}(u_{1},u_{2})\geq \psi
(u_{1}+u_{2})^{2}-c(u_{1}+u_{2}),\ u_{1}\geq 0,\ u_{2}\geq 0
\end{equation*}
\end{proof}

The lower solution can be constructed by using the solution of the Cauchy problem:
\begin{equation*}
\left \{ 
\begin{array}{l}
\widehat{p}^{\prime }+\overline{\tau }\widehat{p}=\underline{\psi }\widehat{p%
}^{2} \\ 
\widehat{p}(0)=\widehat{p}_{0}%
\end{array}%
\right. 
\end{equation*}%
 where $\overline{\tau }$,$\underline{\psi }$
are arbitrary non-zero constants. In the following, we show the  blow-up of the problem $(\ref{1})$
%THEOREM 3.2++++++++++++++++++++++++++++++++++++++++++++++++++++++++++++++++++
\begin{theorem}
\label {d}
Let $(f_{1},f_{2})$ be locally Lipschitz function in $\mathbb{R}^{+}\times 
\mathbb{R}^{+}$ and satisfy the condition $(\ref{30})$ and the relation $(%
\ref{17})$, and let $\left( u_{1},u_{2}\right) $ be the local positive
solution of $(\ref{1})$. Then for any nonnegative $\left( u_{1}(0,x),u_{2}(0,x)\right) $ satisfying the condition
\begin{equation}
\widehat{p}_{0}>\dfrac{\overline{\tau }}{\underline{\psi }}  \label{19}
\end{equation}
 there exists a finite $T^{\ast }\leq T_{0}$ such that $\left(
u_{1},u_{2}\right) $ possesses the blowing-up property $(\ref{15}),$ where $%
T_{0}$\begin{equation}
T_{0}=\dfrac{1}{\overline{\tau }}\ln [\dfrac{\underline{\psi }\widehat p_{0}}{%
\underline{\psi }\widehat p_{0}-\overline{\tau }}].  \label{T0}
\end{equation}
\end{theorem}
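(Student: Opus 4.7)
The plan is to invoke Theorem \ref{c} by producing a spatially integrated quantity associated with $u_1+u_2$ that escapes to infinity in finite time; the key step is to reduce the coupled system (\ref{1}) to the scalar Bernoulli ODE announced just before the theorem.

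First, I would integrate each equation in (\ref{1}) over $\Omega$ and use the homogeneous Neumann condition together with the divergence theorem to annihilate the diffusion contribution, $\int_\Omega \Delta[(d_i+\alpha_i u_i)u_i]\,dx = 0$. Weighting the resulting identities by $\mu_1$ and $\mu_2$ and summing gives
\begin{equation*}
\frac{d}{dt}P(t) \;=\; \int_\Omega \bigl(\mu_1 f_1(u_1,u_2) + \mu_2 f_2(u_1,u_2)\bigr)\,dx,
\end{equation*}
where $P(t) := \mu_1\int_\Omega u_1\,dx + \mu_2\int_\Omega u_2\,dx$. The growth condition (\ref{17}), the Cauchy--Schwarz estimate $\int_\Omega (u_1+u_2)^2\,dx \geq |\Omega|^{-1}\bigl(\int_\Omega (u_1+u_2)\,dx\bigr)^2$, and the two-sided bound $\min(\mu_1,\mu_2)\int_\Omega(u_1+u_2)\,dx \leq P(t) \leq \max(\mu_1,\mu_2)\int_\Omega(u_1+u_2)\,dx$ then combine to give the scalar differential inequality
\begin{equation*}
P'(t) \;\geq\; \underline\psi\, P(t)^{2} - \overline\tau\, P(t),
\end{equation*}
with explicit positive constants $\underline\psi,\overline\tau$ built from $\psi$, $c$, $|\Omega|$ and $\mu_1,\mu_2$.

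Next, I would analyze the Cauchy problem $\widehat p' + \overline\tau\, \widehat p = \underline\psi\, \widehat p^{\,2}$, $\widehat p(0)=\widehat p_0$, via the Bernoulli substitution $y = 1/\widehat p$, which converts it into the linear equation $y' - \overline\tau\, y = -\underline\psi$. A direct integration shows that, under (\ref{19}), $y(t)$ vanishes precisely at $T_0$ defined by (\ref{T0}), so $\widehat p(t)\to+\infty$ as $t\to T_0^{-}$. Provided the initial data satisfy $P(0) \geq \widehat p_0$ (which is the translation of (\ref{19}) to the initial datum), the standard scalar comparison principle for ODEs yields $P(t) \geq \widehat p(t)$ on $[0,T_0)$, and hence $P$ itself must escape in finite time $T^{\ast} \leq T_0$.

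Finally, from $P(t) \leq |\Omega|\,\max(\mu_1,\mu_2)\,\max_{\overline\Omega}\bigl[u_1(t,\cdot)+u_2(t,\cdot)\bigr]$, the blow-up of $P$ forces the pointwise maximum of $u_1+u_2$ to become unbounded as $t\to T^{\ast-}$, which is exactly (\ref{15}); Theorem \ref{c} then packages this into the advertised blow-up statement. The step I expect to be the main obstacle is the bookkeeping of constants, namely the precise identification of $\underline\psi$ and $\overline\tau$ in terms of $\psi$, $c$, $|\Omega|$, $\mu_1,\mu_2$ so that the bound $\widehat p_0 > \overline\tau/\underline\psi$ in (\ref{19}) translates transparently into a usable condition on $\bigl(u_1(0,\cdot),u_2(0,\cdot)\bigr)$; once this is set, the remaining scalar ODE comparison is routine.
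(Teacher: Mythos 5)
Your proposal is correct and follows the same overall strategy as the paper: reduce the PDE system to a Bernoulli-type differential inequality for a weighted spatial average, solve the associated Cauchy problem to exhibit finite-time escape at $T_{0}$ given by (\ref{T0}), and conclude unboundedness of $u_{1}+u_{2}$ via the bound of the average by the pointwise maximum. The one substantive difference is the choice of weight: the paper multiplies by the eigenfunction $\Phi_{0}$ of the Neumann Laplacian and carries the resulting $-\lambda_{0}$ terms through the estimates, which forces it to absorb $\lambda_{0}\overline{d}\,\widehat{p}$ and $\lambda_{0}\alpha\overline{\mu}^{-1}\widehat{p}^{2}$ into $\overline{\tau}=\lambda_{0}\overline{d}+c\underline{\mu}^{-1}$ and $\underline{\psi}=\psi\overline{\mu}^{-2}-\lambda_{0}\alpha\overline{\mu}^{-1}$, the latter carrying an implicit positivity requirement. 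You instead use the constant weight, so that the divergence theorem kills the diffusion term outright and you land on $\underline{\psi}=\psi|\Omega|^{-1}\overline{\mu}^{-2}>0$ and $\overline{\tau}=c\underline{\mu}^{-1}$ with no extra sign condition; for homogeneous Neumann data this is the natural (and cleaner) normalization, at the cost of different explicit constants in (\ref{19}) and (\ref{T0}). Your use of the ODE comparison principle in place of the paper's direct integration of the differential inequality is an equivalent final step.
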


\begin{proof}
Consider $(v_{1}^{(0)},v_{2}^{(0)})=(\widehat{p}_{1}(t)\Phi _{0},\widehat{p}_{2}(t)\Phi _{0})$ is a lower solution of $(\ref{1})$ where $\Phi _{0}$  is the positive eigenfunction corresponding to the eigenvalue $
\lambda _{0}$.  Let $\left( u_{1},u_{2}\right) $ be the local positive
solution of $(\ref{1})$ and define the weighted average of the solution $\left( u_{1},u_{2}\right)$ by
 
\begin{equation*}
\widehat{p}_{1}(t)=\left \vert \Omega \right \vert ^{-1}\int \nolimits_{\Omega
}\Phi _{0}(x)u_{1}(x,t)dx,\text{ \  \ }\widehat{p}_{2}(t)=\left \vert \Omega
\right \vert ^{-1}\int \nolimits_{\Omega }\Phi _{0}(x)u_{2}(x,t)dx\text{\ and}
\end{equation*}%
\begin{equation}
\widehat{p}(t)=\left \vert \Omega \right \vert ^{-1}\int \nolimits_{\Omega
}\Phi _{0}(x)[\mu _{1}u_{1}(x,t)+\mu _{2}u_{2}(x,t)]dx=\mu _{1}\widehat{p}%
_{1}(t)+\mu _{2}\widehat{p}_{2}(t)\quad\text{with}\quad \widehat{p}(0)=\widehat{p}_{0}  \label{21}
\end{equation}
\noindent Multiply the first equation in $(\ref{1})$ by $\Phi
_{0}$ of the laplacian and integrate over $\Omega $ using Green's
theorem which implies

\begin{equation*}
\int_{\Omega }u_{1t}\Phi _{0}dx\geq \int_{\Omega }\Delta \Phi
_{0}[(d_{1}+\alpha _{1}u_{1})u_{1}]dx+\int_{\Omega }f_{1}(u_{1},u_{2})\Phi
_{0}dx
\end{equation*}

\begin{equation}
\int_{\Omega }u_{1t}\Phi _{0}dx\geq -\int_{\Omega }\lambda _{0}\Phi
_{0}d_{1}u_{1}(x,t)dx-\int_{\Omega }\lambda _{0}\alpha
_{1}u_{1}^{2}(x,t)\Phi _{0}dx+\int_{\Omega }f_{1}(u_{1},u_{2})\Phi _{0}dx
\label{22}
\end{equation}

\noindent Multiply $(\ref{22})$ by\ $\mu _{1}$
\begin{equation}
\mu _{1}\left \vert \Omega \right \vert \widehat{p}_{1}^{\prime }(t)\geq
-\mu _{1}\lambda _{0}d_{1}\left \vert \Omega \right \vert \widehat{p}%
_{1}(t)-\lambda _{0}\alpha _{1}\mu _{1}\left \vert \Omega \right \vert 
\widehat{p}_{1}^{2}(t)+\int_{\Omega }\mu _{1}f_{1}(u_{1},u_{2})\Phi _{0}dx
\label{23}
\end{equation}

\noindent An analogue reasoning for the second equation in $(\ref{1})$, gives

\begin{equation}
\mu _{2}\left \vert \Omega \right \vert \widehat{p}_{2}^{\prime }(t)\geq
-\mu _{2}\lambda _{0}d_{2}\left \vert \Omega \right \vert \widehat{p}%
_{2}(t)-\lambda _{0}\alpha _{2}\mu _{2}\left \vert \Omega \right \vert 
\widehat{p}_{2}^{2}(t)+\int_{\Omega }\mu _{2}f_{2}(u_{1},u_{2})\Phi _{0}dx
\label{24}
\end{equation}

\noindent The combination of the above inequalities $(\ref{23})$ and $(\ref{24})$ gives
$$
\left \vert \Omega \right \vert [\mu _{1}\widehat{p}_{1}^{\prime }(t)+\mu_{2}\widehat{p}_{2}^{\prime }(t)] \geq -
\lambda _{0}\left \vert \Omega\right \vert [\mu _{1}d_{1}\widehat{p}_{1}(t)+\mu _{2}d_{2}\widehat{p}_{2}(t)]-$$
\begin{equation}
  \lambda _{0}\left \vert \Omega \right \vert [\mu _{1}\alpha _{1}
\widehat{p}_{1}^{2}(t)+\mu _{2}\alpha _{2}\widehat{p}_{2}^{2}(t)] 
  +\int_{\Omega }[\mu _{1}f_{1}(u_{1},u_{2})+\mu _{2}f_{2}(u_{1},u_{2})]\Phi
_{0}dx  \label{25}    
 \end{equation}
\noindent We notice that
\begin{equation*}
\widehat{p}^{2}(t)>\mu _{1}^{2}\widehat{p}_{1}^{2}(t)\text{ and }\widehat{p}%
^{2}(t)>\mu _{2}^{2}\widehat{p}_{2}^{2}(t)
\end{equation*}

\noindent If $\lambda _{0}>0$ multiply  the two inequalities by $-\lambda _{0}\left \vert \Omega
\right
\vert \alpha _{1},\ -\lambda _{0}\left \vert \Omega \right \vert
\alpha _{2}$ respectively, we get

\begin{equation*}
-\lambda _{0}\left \vert \Omega \right \vert \alpha _{1}\widehat{p}%
^{2}(t)<-\lambda _{0}\left \vert \Omega \right \vert \mu _{1}^{2}\alpha _{1}%
\widehat{p}_{1}^{2}(t)\text{ and }-\lambda _{0}\left \vert \Omega \right
\vert \alpha _{2}\widehat{p}^{2}(t)<-\lambda _{0}\left \vert \Omega \right
\vert \mu _{2}^{2}\alpha _{2}\widehat{p}_{2}^{2}(t)
\end{equation*}

\noindent taking\ $\alpha =\alpha _{1}+\alpha _{2},$ $\overline{\mu }=\max \left \{
\mu _{1},\mu _{2}\right \} $ and substitute in the above inequalities
\begin{equation}
-\lambda _{0}\left \vert \Omega \right \vert \alpha \widehat{p}%
^{2}(t)<-\lambda _{0}\left \vert \Omega \right \vert \overline{\mu }[\mu
_{1}\alpha _{1}\widehat{p}_{1}^{2}(t)+\mu _{2}\alpha _{2}\widehat{p}%
_{2}^{2}(t)] \label{26}
\end{equation}

\noindent for $\overline{d}=\max \left \{ d_{1},d_{2}\right \}$, then by $(\ref{26})$ and the growth condition $(
\ref{17})$ on $(f_{1},f_{2})$, the relation $(\ref{25})$  becomes

\begin{equation}
\left \vert \Omega \right \vert \widehat{p}^{\prime }(t)+\lambda _{0}\left
\vert \Omega \right \vert \overline{d}\widehat{p}(t)+\lambda _{0}\left \vert
\Omega \right \vert \alpha \overline{\mu }^{-1}\widehat{p}%
^{2}(t)>\int_{\Omega }\psi (u_{1}+u_{2})^{2}\Phi _{0}dx-\int_{\Omega
}c(u_{1}+u_{2})\Phi _{0}dx \label{27}
\end{equation}

\noindent We use the relation:
\begin{equation*}
\overline{\mu }^{-1}(\mu _{1}u_{1}+\mu _{2}u_{2})\leq (u_{1}+u_{2})\leq 
\underline{\mu }^{-1}(\mu _{1}u_{1}+\mu _{2}u_{2}),\text{ }\underline{\mu }%
=\min (\mu _{1},\mu _{2})
\end{equation*}
We get
\begin{equation}
-\int_{\Omega }c(u_{1}+u_{2})\Phi _{0}dx\geq -\int_{\Omega }c\underline{\mu }%
^{-1}(\mu _{1}u_{1}+\mu _{2}u_{2})\Phi _{0}dx\geq -c\underline{\mu }%
^{-1}\left \vert \Omega \right \vert \widehat{p}(t)  \label{27a}
\end{equation}
and
\begin{equation}
\int_{\Omega }\psi (u_{1}+u_{2})^{2}\Phi _{0}dx\geq \psi \overline{\mu }%
^{-2}\int_{\Omega }(\mu _{1}u_{1}+\mu _{2}u_{2})^{2}\Phi _{0}dx  \label{27b}
\end{equation}
Since by Holder's inequality,
$$\int_{\Omega }\Phi _{0}(\mu _{1}u_{1}+\mu _{2}u_{2})dx\leq (\int_{\Omega
}dx)^{1/2}.(\int_{\Omega }[\Phi _{0}(\mu _{1}u_{1}+\mu
_{2}u_{2})]^{2}dx)^{1/2}$$

Relation ($\ref{21}$) and $\Phi _{0}\leq 1$ imply that

\begin{equation}
 \int_{\Omega }(\mu _{1}u_{1}+\mu _{2}u_{2})^{2}\Phi _{0}dx\geq
\left \vert \Omega \right \vert \widehat{p}^{2}(t)  \label{27c}
\end{equation}
 The inequality $(\ref{27b})$ leads to
\begin{equation}
\int_{\Omega }\psi (u_{1}+u_{2})^{2}\Phi _{0}dx\geq\psi \overline{\mu }
^{-2}
\left \vert \Omega \right \vert \widehat{p}^{2}(t)  \label{27e}
\end{equation}
\noindent We replace $(\ref{27a})$ and $(\ref{27e})$ in the inequality $(\ref{27})$, we obtain
\begin{equation}
\widehat{p}^{\prime }(t)+(\lambda _{0}\overline{d}+c\underline{\mu }^{-1})%
\widehat{p}(t)\geq (\psi \overline{\mu }^{-2}-\lambda _{0}\alpha \overline{\mu }^{-1})\widehat{p}^{2}(t)  \label{28}
\end{equation}

\noindent taking $\overline{\tau }=\lambda _{0}\overline{d}+c\underline{\mu }^{-1};\ 
\underline{\psi }=\psi \overline{\mu }^{-2}-\lambda _{0}\alpha \overline{\mu }^{-1}$, this gives%
\begin{equation*}
\widehat{p}^{\prime }(t)+\overline{\tau }\widehat{p}(t)\geq \underline{\psi }%
\widehat{p}^{2}(t)
\end{equation*}

\noindent Integrate the above inequality to get

\begin{equation}
\widehat{p}(t)\geq \dfrac{\exp (-\overline{\tau }t)}{\dfrac{1}{\widehat{p}%
_{0}}-\dfrac{\underline{\psi }}{\overline{\tau }}(1-\exp (-\overline{\tau }%
t))},0\leq t\leq T_{0}  \label{29}
\end{equation}
\noindent With this function $\widehat{p}(t)$, $(v_{1}^{(0)},v_{2}^{(0)})=(\widehat{p}_{1}(t)\Phi _{0},\widehat{p}_{2}(t)\Phi _{0})$ is a lower solution of $(\ref{1})$ for every $T<T_{0}$  where $T_{0}$ is given by the right-hand side of $(\ref{T0}).$
\noindent Since for $\widehat{p}_{0}>\overline{\tau }/\underline{\psi }$ the function $
\widehat{p}(t)$ grows unbounded as $t\rightarrow T_{0}$, there exists 
$T^{\ast }\leq T_{0}$ such that \ $\widehat{p}(t)\rightarrow +\infty $ as $
t\rightarrow T^{\ast }$.  By relation $(\ref{21})$, $(u_{1},u_{2})$ must be unbounded in $\left[0,T^{\ast }\right] \times \overline{\Omega }$, which implies that the unique positive solution
$\left(u_{1},u_{2}\right)$ possess the blowing-up property $(\ref{15})$.
\end{proof}
\subsection{Discussion on the blow-up parameters}

\label{Subsec:6} Following the notations in (\ref{30}), we obtain the two
conditions on the parameters causing blow-up of the solution

\begin{itemize}
\item If $0<\mu _{1}<\mu _{2}$ we choose $\psi _{1}=\mu _{1}b_{1}$ then%
\begin{equation*}
\mu _{1}b_{2}<\mu _{2}b_{2}\Leftrightarrow \frac{\mu _{1}c_{1}+\mu _{1}b_{2}%
}{2}<\frac{\mu _{1}c_{1}+\mu _{2}b_{2}}{2}
\end{equation*}
\end{itemize}

\begin{equation*}
\frac{\mu _{1}c_{1}+\mu _{1}b_{2}}{2}<\psi _{2}<\psi _{1}
\end{equation*}

\noindent we obtain the first condition 
\begin{equation}
c_{1}+b_{2}<2b_{1}
\end{equation}

\begin{itemize}
\item If $\mu _{1}>\mu _{2}>0,$ we choose $\psi _{1}=\mu _{2}c_{2}$ then
\end{itemize}

\begin{equation*}
\frac{\mu _{1}c_{1}+\mu _{2}b_{2}}{2}>\frac{\mu _{2}c_{1}+\mu _{2}b_{2}}{2}
\end{equation*}

\begin{equation*}
\psi _{1}>\frac{\mu _{1}c_{1}+\mu _{2}b_{2}}{2}>\frac{\mu _{2}c_{1}+\mu
_{2}b_{2}}{2}
\end{equation*}

\begin{equation*}
\mu _{2}c_{2}>\frac{\mu _{2}c_{1}+\mu _{2}b_{2}}{2}
\end{equation*}

\noindent we obtain the second condition : 
\begin{equation}
2c_{2}>c_{1}+b_{2}
\end{equation}

%\begin{theorem}[Adam Smit]
% The optional material will be typeset as part of the theorem heading
%Text of the theorem\\
%\begin{proof}
%Text of the proof
%\end{proof}
%\end{theorem}
%

\section{Conclusions}

We have obtained the following conditions for the golbal existence of the
solution to the SKT problem :

\noindent let $2\alpha _{1}\lambda _{0}-b_{1}\leq 0$, $2\alpha _{2}\lambda
_{0}-c_{2}\leq 0$ and $(2\alpha _{2}\lambda _{0}-c_{2})(2\alpha _{1}\lambda
_{0}-b_{1})-b_{2}c_{1}>0$ with $c_{2}b_{1}>c_{1}b_{2},$ If the following
conditions hold 
\begin{equation*}
\frac{-a_{1}(2\alpha _{2}\lambda _{0}-c_{2})(2\alpha _{1}\lambda
_{0}-b_{1})+a_{2}c_{1}}{(2\alpha _{2}\lambda _{0}-c_{2})(2\alpha _{1}\lambda
_{0}-b_{1})-b_{2}c_{1}}\leq \frac{a_{1}c_{1}+a_{2}c_{2}}{%
c_{2}b_{1}-c_{1}b_{2}}
\end{equation*}%
\begin{equation}
\frac{-a_{2}(2\alpha _{2}\lambda _{0}-c_{2})(2\alpha _{1}\lambda
_{0}-b_{1})+a_{1}b_{1}}{(2\alpha _{2}\lambda _{0}-c_{2})(2\alpha _{1}\lambda
_{0}-b_{1})-b_{2}c_{1}}\leq \frac{a_{1}b_{2}+a_{2}b_{1}}{%
c_{2}b_{1}-c_{1}b_{2}}
\end{equation}%
then the problem $(\ref{1})$ admits a unique global solution $(u_{1},u_{2})$
in $\overline{\Omega }\times \lbrack 0,+\infty ).$

\noindent For the blow-up conditions parameters, we have obtained :

\noindent if $\psi _{1}=\mu _{1}b_{1},\  \psi _{2}=\dfrac{\left( \mu
_{1}c_{1}+\mu _{2}b_{2}\right) }{2}$ with $\psi _{2}<\psi _{1}$ and $0<\mu
_{1}<\mu _{2}$ then $c_{1}+b_{2}<2b_{1}.$

\noindent if $\psi _{1}=\mu _{2}c_{2},\  \psi _{2}=\dfrac{\left( \mu
_{1}c_{1}+\mu _{2}b_{2}\right) }{2}$ with $\psi _{2}<\psi _{1}$ and $\mu
_{1} $ $>\mu _{2}>0$ then $c_{1}+b_{2}<2c_{2}.$

\noindent Under these conditions the solution of problem $(\ref{1})$ blows
up as $t\rightarrow T^{\ast }.$

%%%%%%%%%%%%%

\end{document}